\documentclass[a4paper]{article}
\usepackage[utf8]{inputenc}

\usepackage[normalem]{ulem}

\usepackage[margin=1in]{geometry}

\usepackage{amsmath, amssymb, mathtools}
\usepackage[hidelinks]{hyperref}

 \usepackage[dvipsnames]{xcolor}

\usepackage{amsthm}
\usepackage[noabbrev]{cleveref}
\newtheorem{thm}{Theorem}[section]
\newtheorem{lm}[thm]{Lemma}

\newtheorem{crl}[thm]{Corollary}
\newtheorem{prop}[thm]{Proposition}
\newtheorem{const}[thm]{Construction}

\theoremstyle{definition}

\newtheorem{rmk}[thm]{Remark}
\newtheorem{df}[thm]{Definition}

\newtheorem{ex}[thm]{Example}

\renewcommand{\phi}{\varphi}

\newcommand{\FF}{\mathbb F}

\newcommand{\QQ}{\mathbb Q}
\newcommand{\ZZ}{\mathbb Z}

\renewcommand{\leq}{\leqslant}
\renewcommand{\geq}{\geqslant}

\newcommand{\vspan}[1]{\left \langle #1 \right \rangle}
\newcommand{\vspann}[2]{\left \langle #1 \, \| \, #2 \right \rangle}
\newcommand{\set}[1]{ \left \{ #1 \right \} }
\newcommand{\sett}[2]{ \left\{ #1 \, \, || \, \, #2 \right \} }

\newcommand{\one}{\mathbf 1}
\newcommand{\zero}{\mathbf 0}
\newcommand{\floor}[1]{\left \lfloor #1 \right \rfloor}
\newcommand{\ceil}[1]{\left \lceil #1 \right \rceil}

\newcommand{\mb}{\mathcal B}

\newcommand{\ml}{\mathcal L}

\renewcommand{\mp}{\mathcal P}

\newcommand{\ms}{\mathcal S}

\DeclareMathOperator{\dir}{dir}

\definecolor{zscolor}{RGB}{255,128,0}

\definecolor{csbcolor}{RGB}{0,0,255}

 \DeclareMathOperator{\Tr}{Tr}
    
 \DeclareMathOperator{\PG}{PG}
    
 \DeclareMathOperator{\AG}{AG}
    \newcommand{\ag}{\AG}

 \DeclareMathOperator{\GL}{GL}

 \newcommand{\fq}{\FF_q}
 \newcommand{\fqi}{\FF_q \cup \set \infty}
 \DeclareMathOperator{\Lift}{Lift}
 \newcommand{\ns}{\overline S}
 \newcommand{\zq}{\ZZ / q \ZZ}

\makeatletter
\let\@fnsymbol\@arabic  
\makeatother

\title{On equidistributed directions in finite affine planes}
\author{%
Sam Adriaensen%
\thanks{Department of Mathematics and Data Science,
Vrije Universiteit Brussel, Pleinlaan 2,
1050 Elsene, Belgium.
Email: \texttt{sam.adriaensen@vub.be}.}
\and
Bence Csajb\'ok%
\thanks{Department of Computer Science,
ELTE E\"otv\"os Lor\'and University,
H-1117 Budapest, P\'azm\'any P.\ stny.\ 1/C, Hungary.
Emails: \texttt{bence.csajbok@ttk.elte.hu} (Bence Csajb\'ok);
\texttt{zsuzsa.weiner@gmail.com} (Zsuzsa Weiner).}
\and
Zsuzsa Weiner%
{%
\thanks{Prezi.com}}
}
\date{}

\begin{document}

\maketitle

\begin{abstract}
 We investigate a recently introduced generalisation of determined directions in affine planes.
 A direction $(d)$ in an affine plane is a point of the line at infinity in the projective completion, and hence corresponds to a parallel class.
 A (multi)set $S$ of points is equidistributed from direction $(d)$ if all lines from that parallel class intersect $S$ in the same number of points.
 In this paper, we give a construction of point (multi)sets that are inequidistributed from exactly 3 directions in any finite translation plane, and prove that all such (multi)sets arise from this construction, generalising a result of Kiss and Somlai in Desarguesian planes of prime order.
 
We also use ideas from algebraic graph theory to prove results
on directions associated with a pair of point sets $S$ and $T$.
If for every direction $(d)$ either $S$ or $T$ is equidistributed
from $(d)$, we prove that $|S \cap T| = |S||T|/q^2$,
where $q$ is the order of the affine plane. 
 In the appendix, we give a combinatorial alternative approach to proving this equality, which can be of independent interest.
 We also introduce the notion of directions cross-determined by a pair of sets $S$ and $T$, and prove that $|S| |T| \leq q^2$ if $(S,T)$ cross-determines at most half of the directions, where equality forces $S = T$.
\end{abstract}

\section{Introduction}

\subsection{Previous work}

In this paper, we study finite axiomatic affine planes.
We use the notation $A_q = (\mp,\ml)$ for an affine plane of order $q$ with point set $\mp$ and line set $\ml$, where each line is a set of $q$ points.
The classical Desarguesian plane arising from the finite field $\fq$ of order $q$ will be denoted by $\ag(2,q)$.
In that case, the point set is $\fq^2$.

An affine plane of order $q$ can be extended uniquely to a projective plane of order $q$.
This requires adding $q+1$ new points that are on a new line $\ell_\infty$.
The points of $\ell_\infty$ are called \emph{directions}, and correspond to the parallel classes of lines in the affine plane.
We say that a line $\ell$ of the affine plane $A_q$ has direction $(d) \in \ell_\infty$ if $\ell$ contains $(d)$ in the projective extension.
Note that in $\ag(2,q)$, a parallel class consists of all lines with the same slope, which explains the terminology direction.
In that case, we represent the directions as $(d)$ with $d \in \fqi$.

\begin{df}
 A set $S$ of points in an affine plane $A_q$ is said to \emph{determine} a direction $(d) \in \ell_\infty$ if there exist 2 distinct points $P,Q \in S$ such that the line $PQ$ has direction $(d)$.
\end{df}

Note that if a set of points $S$ contains more than $q$ points, with $q$ the order of the plane, then by the pigeonhole principle, $S$ determines every direction.
The central question is determining which sets of points whose size is equal or close to $q$ determine only a small number of directions.
This has been mostly studied in Desarguesian planes, starting with the work of Rédei \cite{Redei}, and culminating in the seminal papers of Blokhuis, Ball, Brouwer, Storme, and Sz\H onyi \cite{BBBSS} and Ball \cite{Ball}.

More recently, Ghidelli \cite{Ghidelli} introduced a sensible generalisation of the problem to sets of points whose size exceeds the order $q$ of the affine plane.
He calls a direction $(d)$ \emph{special} for a set $S$ of points in $A_q$ if there exists a line $\ell$ with direction $(d)$ that does not intersect $S$ in $\floor{|S|/q}$ or $\ceil{|S|/q}$ points.
Note that if $|S| \leq q$, then special directions coincide with determined directions.
Ghidelli proved the following extension of a theorem of Rédei \cite{Redei} and Sz\H onyi \cite{Szonyi} concerning Desarguesian planes of prime order.

\begin{thm}[{\cite[Theorem 1.3]{Ghidelli}}]
 Let $p$ be a prime number, and $S$ a set of $kp-r$ points in $\ag(2,p)$ with $0 \leq r < p$ and $k \geq 1$.
 Then either $S$ is contained in the union of $k$ lines, or there are at least $\frac{p+k+2-r}{k+1}$ special directions.
\end{thm}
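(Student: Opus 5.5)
We would prove the statement with the Rédei polynomial method in the style of Rédei and Szőnyi, using the lacunary-polynomial bound in prime characteristic.

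\textbf{Setup.} Write $S=\{(a_i,b_i): i=1,\dots,kp-r\}$. Add $r$ further points, allowing repetition, to obtain a multiset $S'$ of exactly $kp$ points. Form the Rédei polynomial
\[ R(X,Y)=\prod_{i}\bigl(X+a_iY-b_i\bigr). \]
Fix $y\in\FF_p$. The roots of $R(X,y)$, counted with multiplicity, record how many points of $S'$ lie on each line of slope $-y$, up to a sign convention. Call $(y)$ non-special if every line in that parallel class meets $S$ in $k-1$ or $k$ points. For such $y$, every line meets $S'$ in $k$ points up to the $r$ added points, so
\[ R(X,y)\cdot g_y(X)=(X^p-X)^k\cdot h_y(X) \]
for polynomials $g_y$ and $h_y$ of degree at most $r$.

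\textbf{Key step.} Expand
\[ R(X,Y)=X^{kp}+\sum_{j\geq 1} \sigma_j(Y)X^{kp-j}, \qquad \deg \sigma_j\leq j. \]
The relation above forces the coefficients of $X^{kp-j}$ to behave like those of $(X^p-X)^k$ corrected by a degree-$r$ factor. In particular, they are determined by the first $r$ power sums. Newton's identities then show that for $j$ in a range $r<j<p$, the coefficient $\sigma_j(y)$ is given by a universal polynomial expression $\Phi_j(y)$ in $\sigma_1(y),\dots,\sigma_r(y)$. This identity holds at every non-special $y$.

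Now compare degrees. The polynomial $\sigma_j-\Phi_j(\sigma_1,\dots,\sigma_r)$ has degree at most $j$, since $\Phi_j$ is weighted-homogeneous of weight $j$. It vanishes at all $N$ non-special directions, with $N\geq p+1-s$ where $s$ is the number of special directions. If $N>j$, then it vanishes identically as a polynomial in $Y$.

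Take $j$ up to roughly $N-1$, where the cofactor is pinned down by degree. One concludes that for generic $Y$ the polynomial $R(X,Y)$ itself has the "$k$ lines" shape. Its $X$-roots over $\overline{\FF_p(Y)}$ then split into at most $k$ groups, each linear in $Y$. Since the factors $X+a_iY-b_i$ are linear in $Y$, this means all points of $S'$ lie on at most $k$ lines of the form $X+aY=b$. Removing the added points, $S$ lies on $k$ lines.

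\textbf{Counting.} The dependence of the usable range of $j$ on $k$ and $r$ gives the bound $\frac{p+k+2-r}{k+1}$. The freedom is the degree-$r$ cofactor, which costs about $r$ constraints. The $k$-fold structure costs a factor $k+1$ in degree, because the relevant identities take the form $\sigma(Y)^{k+1}$-type relations of degree $(k+1)j$. So we require $(k+1)(s-1)\geq p+1-r+k-\dots$, which rearranges to the claimed bound.

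\textbf{Main obstacle.} The hardest step is making the degree bookkeeping precise, in particular why the threshold carries the factor $k+1$. We would try Szőnyi's approach first: consider the polynomial $\sum_i (X+a_iY-b_i)^{p-1}$. Evaluated at a fixed $y$, it counts intersection sizes modulo $p$. At non-special $y$ each line meets $S$ in $k-1$ or $k$ points, and $(k-1)$ and $k$ are distinct modulo $p$. Hence there are polynomials of degree $\leq k+1$ in the count whose product vanishes. The degree of the resulting polynomial identity in $Y$ is about $(k+1)\cdot(\text{number of special directions})$ against $p+k+2-r$.

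If this fails to close, we fall back to Ghidelli's original argument via power sums $\sum_i (b_i-a_iy)^t$, $t<p$, which likewise vanish appropriately.
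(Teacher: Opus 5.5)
The paper gives no proof of this statement; it quotes it from Ghidelli as background, so there is no in-paper argument to compare with. Judged on its own, your proposal has a genuine gap.

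\textbf{The decisive step is missing.} Passing from the coefficient identities to ``$S$ lies on $k$ lines'', with a threshold carrying the factor $k+1$, is asserted rather than proved. Your \emph{Counting} paragraph is reverse-engineered from the target: the target is equivalent to $(k+1)(s-1)\ge p+1-r$, and the ``$\sigma(Y)^{k+1}$-type relations'' are never defined.

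Moreover, the identities you extract cannot see $k$. Write $R_S$ for the Rédei polynomial of $S$ itself. Since $(X^p-X)^k$ has no monomials strictly between $X^{kp}$ and $X^{(k-1)p+1}$, at a non-special $y$ your identities amount to the vanishing of the Laurent coefficients $\tau_j(y)$ of $X^{|S|}/R_S(X,y)$ for $r<j\le p-2$. At $j=p-1$ the constant $k$ enters, so that relation is not weighted-homogeneous. The remaining identities are exactly those of the case $k=1$.

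For $k=1$ the argument is closed by a Rédei-type divisibility that holds at \emph{every} $y$, special or not (derivatives below are in $X$). Write $R_S G=X^p+F$ with $\deg_X F\le p-N$. Full reducibility of $R_S(X,y)$ gives $R_S(X,y)\mid (X^p-X)R_S'(X,y)$, and hence $R_S(X,y)\mid (F(X,y)+X)F'(X,y)$. That polynomial has degree at most $2(p-N)-1$, which is less than $p-r$ unless $2N\le p+r-1$; this is where the factor $2=k+1$ comes from.

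For $k\ge 2$ the same manipulation with $R_S G=(X^p-X)^k+H$ only yields $R_S(X,y)\mid kH+(X^p-X)H'$. Here $\deg_X H$ can be as large as $kp-N\ge (k-1)p$, so nothing follows. Some genuinely new input exploiting the multiplicities $k-1,k$ is needed. Neither your main route nor the fallback via $\sum_i (X+a_iY-b_i)^{p-1}$ supplies it; the latter only gives a relation quadratic in the intersection count.

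\textbf{The padding step is wrong.} You pad $S$ with $r$ arbitrary points to a multiset $S'$ and then conclude that $S'$ lies on $k$ lines. That intermediate claim is false in general. Take $p>\max\{k,2\}$ and let $S$ be the union of $k$ parallel lines minus one point, so $r=1$. Every line in any other direction meets $S$ in $k$ or $k-1$ points, so $S$ has a single special direction, far below the bound. Yet if the added point lies off these $k$ lines, $S'$ is not covered by any $k$ lines.

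Hence any argument producing a ``$k$-lines shape'' for the Rédei polynomial of $S'$ is broken. You should work with $R_S$ and the degree-$r$ cofactor $g_y$, i.e.\ $R_S(X,y)g_y(X)=(X^p-X)^k$, where no padding is needed. You should also choose coordinates so that $(\infty)$ is special, since your Rédei polynomial only sees the $p$ finite slopes.
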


Ghidelli \cite[Problem 1.4]{Ghidelli} asked whether the bound can be improved to $\frac{p+3-r}{2}$.
This was answered in the negative by Kiss and Somlai \cite{Kiss:Somlai}, which we will explain shortly.
We first introduce some terminology.

\bigskip 

Many of the results in this paper do not only hold for sets of points, but also for multisets of points in affine planes $A_q$.
An \emph{integer multiset $S$} of points in $A_q$ is defined by a multiplicity function $\mu_S: \mp \to \mathbb Z$.
We do allow negative multiplicities.
This describes an ordinary set if and only if $\mu_S$ only takes values in $\{0,1\}$.
We define the size of $S$ and its intersection size with a line $\ell$ by taking multiplicity into account:
\begin{align*}
 |S| = \sum_{P \in \mp} \mu_S(P), &&
 |S \cap \ell| = \sum_{P \in \ell} \mu_S(P).
\end{align*}

\begin{df}
 Suppose that $S$ is an integer multiset of points in an affine plane $A_q$ of order $q$.
 Let $(d)$ be a direction.
 Then $S$ is \emph{equidistributed} from direction $(d)$ if all lines with direction $(d)$ intersect $S$ in the same number of points (counted with multiplicity), which is necessarily $|S|/q$, and \emph{inequidistributed} otherwise.
 We will also say
 that direction $(d)$ is equidistributed, respectively inequidistributed,
 for $S$.
\end{df}

Note that $S$ can only be equidistributed from some direction if $q$ divides $|S|$.

\begin{rmk}
 In previous articles \cite{Ghidelli, Kiss:Somlai, ASW}, the term special was used instead of inequidistributed. For point sets whose size is divisible by the order of the plane, the two notions coincide. 
 However, being equidistributed from a direction $(d)$ actually stands out, and being inequidistributed is rather unremarkable.
 Therefore, calling such directions special feels counterintuitive, and we avoid this terminology.
\end{rmk}

We now present the remarkable result of Kiss and Somlai, who considered ordinary sets.

\begin{thm}[{\cite{Kiss:Somlai}}]
 \label{Thm:KS}
 Let $p$ be a prime number, and consider $\ag(2,p)$.
 Interpret the elements of the field $\FF_p$ as the integers $0,\dots,p-1$, naturally equipped with the order relation $<$.
 The set
 \[
  K = \sett{(x,y) \in \FF_p^2}{ x < y}
 \]
 is inequidistributed from only 3 directions: $(\infty)$, $(0)$, and $(1)$.
 Moreover, any set in $\ag(2,p)$ that is inequidistributed from exactly 3 directions is the image of $K$ or its complement under an affine transformation.
\end{thm}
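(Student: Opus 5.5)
The statement has two parts: that $K$ is equidistributed from every direction other than $(\infty)$, $(0)$ and $(1)$, and that $K$ and its complement are, up to affine transformations, the only sets with exactly three inequidistributed directions. I would handle both with the discrete Fourier transform on $\FF_p^2$.

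\textbf{Setup.} Let $\omega$ be a primitive complex $p$-th root of unity, let $f=\mu_S$, and put $\hat f(u,v)=\sum_{(x,y)} f(x,y)\,\omega^{ux+vy}$. The lines with direction $(d)$, $d\neq\infty$, are the level sets of $y-dx$. So $S$ is equidistributed from $(d)$ if and only if the restriction of $f$ to these lines has constant fibre sums. That holds if and only if $\hat f(t(-d,1))=0$ for all $t\in\FF_p^*$. Primality enters here: for one $t$, vanishing says $\sum_c N_c\omega^{tc}=0$ with $N_c\in\ZZ$ the line counts. Since $1+\omega+\dots+\omega^{p-1}$ is the only rational relation among these powers, all $N_c$ must be equal. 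Direction $(\infty)$ is treated the same way with the vector $(1,0)$.

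\textbf{First part.} Fix $d\notin\{0,1,\infty\}$ and $t\neq 0$, and view $d$ and $t$ as integers. I would compute
\[
\sum_{x<y}\omega^{t(y-dx)}=\sum_{y=0}^{p-1}\omega^{ty}\sum_{x=0}^{y-1}\omega^{-tdx}
=\frac{1}{1-\omega^{-td}}\Bigl(\sum_{y}\omega^{ty}-\sum_{y}\omega^{t(1-d)y}\Bigr).
\]
The inner sum is a genuine geometric sum over the integers $0,\dots,y-1$, and $td\not\equiv 0 \pmod p$, so the denominator is nonzero. Both complete sums vanish because $t\not\equiv0$ and $t(1-d)\not\equiv 0$. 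Hence $\hat f(t(-d,1))=0$ and $K$ is equidistributed from $(d)$. One also checks directly that $(\infty)$, $(0)$ and $(1)$ are inequidistributed; for instance the column $x=0$ meets $K$ in $p-1$ points while the column $x=p-1$ meets it in none.

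\textbf{Second part: reduction.} Let $S$ be a set with exactly three inequidistributed directions. After an affine transformation these are $(\infty)$, $(0)$ and $(1)$. Then $\hat f$ is supported on the three lines through the origin spanned by $(1,0)$, $(0,1)$ and $(1,-1)$. Inverting the transform gives
\[
f(x,y)=A(x)+B(y)+C(x-y)
\]
for some functions $A,B,C\colon\FF_p\to\QQ$, with $f$ taking values in $\{0,1\}$.

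\textbf{Second part: classification.} I would take differences along the direction $(1)$, which removes $C$:
\[
f(x+1,y+1)-f(x,y)=a(x)+b(y)\in\{-1,0,1\},
\]
where $a(x)=A(x+1)-A(x)$ and $b(y)=B(y+1)-B(y)$ both have total sum $0$ over $\FF_p$. The constraint forces $(\max a-\min a)+(\max b-\min b)\le 2$. Combined with the zero sums, this leaves a few shapes:
\begin{itemize}
\item one of $a,b$ is identically zero, or
\item $a$ and $b$ each take exactly two values differing by $1$, or
\item one takes three consecutive values and the other is constant.
\end{itemize}
I would then analyse each shape by walking along a diagonal line $y=x+c$. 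Along such a line $f$ is a $\{0,1\}$-valued sequence whose successive increments are $a(x)+b(x+c)$. Summing the increments around the full cycle gives $0$, and equidistribution-type counts across the $p$ diagonals are rigid. From this I expect to show that on each diagonal $f$ switches exactly once from $0$ to $1$ and once back, with all switch points aligned along two lines. This should force $f$, up to an affine map preserving the three directions, to be $\one_{x<y}$ or $1-\one_{x<y}$.

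\textbf{Main obstacle.} I expect the hardest step to be this final combinatorial case analysis. The difficulty is excluding degenerate shapes, such as the case $a\equiv 0$, which would make $f$ depend on fewer variables and hence give fewer than three inequidistributed directions. It is also the step that pins down the switch points to a single cyclic "cut", using that $\FF_p$ has no proper nontrivial additive subgroups.
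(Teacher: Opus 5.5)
Your first part is fine. The geometric-sum computation does show $\hat f(t(-d,1))=0$ for $d\notin\{0,1,\infty\}$. The reduction to $f(x,y)=A(x)+B(y)+C(x-y)$ and to $f(x+1,y+1)-f(x,y)=a(x)+b(y)\in\{-1,0,1\}$ is also sound; you may take $A$ rational because $pA(x)$ equals, up to an additive constant, the number $N(x)$ of points of $S$ on the line $X=x$.

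\textbf{The gap.} The gap is the classification itself, which you only announce, and the difficulty is not where you place it. The degenerate shapes are immediate. If $a\equiv 0$ then $A$ is constant, so $S$ is equidistributed from $(\infty)$; likewise $b\equiv 0$ contradicts inequidistribution from $(0)$. So you are always in the case $f(x+1,y+1)-f(x,y)=\one_X(x)-\one_Y(y)$ with $|X|=|Y|=m$ and $1\le m\le p-1$.

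The real problem is that your intended intermediate claim is false in these coordinates. Take the image of $K$ under $(x,y)\mapsto(2x,2y)$ in $\AG(2,5)$; here $m=3$. Along the diagonal $y=x+1$, its values for $x=0,\dots,4$ are $1,0,1,0,0$, so there are two switches from $0$ to $1$. For $m\ge 2$, being $\{0,1\}$-valued only tells you that, for every $c$, the nonzero values of $\one_X-\one_{Y-c}$ alternate in sign along the cyclic order $0,1,\dots,p-1$. Deriving the structure of $X$ and $Y$ from that (they turn out to be translates of $\{0,m^{-1},\dots,(m-1)m^{-1}\}$) is a genuine additive-combinatorial problem, and your sketch does not address it.

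\textbf{The missing idea.} The two-valuedness of $a$ is really linearity modulo $p$. Since $N(x+1)-N(x)=p\,a(x)\in\{-m,p-m\}$, you get $N(x)\equiv N(0)-mx\pmod p$. Similarly, the row counts $N'(y)$ satisfy $N'(y)\equiv N'(0)+my\pmod p$. This is the additive map $g_1$ in the paper's proof of Theorem~\ref{thm:integer-three-directions}. The paper itself does not reprove Theorem~\ref{Thm:KS}; it quotes Kiss and Somlai.

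Now replace $f$ by $f(m^{-1}x,m^{-1}y)$, which preserves the three directions and the affine class. Column counts then change by $-1$ modulo $p$ per unit step, and row counts by $+1$. Since $0\le N,N'\le p$, every column step is $-1$ except exactly one step equal to $p-1$. Likewise every row step is $1$ except exactly one equal to $1-p$. Hence $X=\{x_0\}$ and $Y=\{y_0\}$ are singletons, and after a translation $x_0=y_0=p-1$.

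Only now does your diagonal walk work. For $c\neq 0$, the line $y=x+c$ has a single up-switch, from $x=p-1$ to $x=0$. It has a single down-switch, from the $x$ with $\nu(x)=p-1-\nu(c)$ to $x+1$. So $f(x,x+c)=1$ exactly when $\nu(x)<\nu(x+c)$, while $f$ is constant on $y=x$. Hence $f$ is either $\mu_K$ or the indicator of $\{(x,y):\nu(x)\le\nu(y)\}$. The latter is the image of the complement of $K$ under $(x,y)\mapsto(y,x)$.
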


Very recently, Ghidelli, Kiss, and Somlai \cite{Ghidelli:Kiss:Somlai} proved a strong non-existence result on sets with few inequidistributed directions in Desarguesian planes of prime order, answering a question posed in \cite{ASW}.

\begin{thm}[{\cite{Ghidelli:Kiss:Somlai}}]
 Suppose that $p$ is a prime number, and $k$ is an integer with 
 \[
  4 \leq k \leq \frac 18 \frac{\sqrt p}{2 + \log p}.
 \]
 Then $\AG(2,p)$ has no point sets with exactly $k$ inequidistributed directions.
\end{thm}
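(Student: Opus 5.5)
The plan is to pass to Fourier analysis on $\FF_p^2$ and turn the hypothesis into a rigid functional equation. Write $\chi_S$ for the indicator of $S$ and $\widehat{\chi_S}(\xi)=\sum_{P}\chi_S(P)\,\omega^{\langle \xi,P\rangle}$ with $\omega$ a primitive $p$-th root of unity. For a direction $(d)$, the lines with direction $(d)$ are the level sets of a linear form $\lambda_d$. So $S$ is equidistributed from $(d)$ exactly when $\widehat{\chi_S}$ vanishes on the nonzero multiples of $\lambda_d$. Hence, if $S$ has exactly $k$ inequidistributed directions $(d_1),\dots,(d_k)$, then $\widehat{\chi_S}$ is supported on $\{0\}$ together with $k$ lines through the origin. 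Inverting the transform gives a decomposition
\[
 \chi_S(P)=c+\sum_{i=1}^k f_i(\lambda_{d_i}(P)),
\]
where $f_i:\FF_p\to\QQ(\omega)$ has mean zero. Each $f_i$ is, up to normalisation, the function $t\mapsto |S\cap\ell_{i,t}|-|S|/p$, where $\ell_{i,t}$ runs over the lines with direction $(d_i)$. In particular the $f_i$ are rational-valued and not identically zero.

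Next I would exploit the fact that the left-hand side takes only the values $0$ and $1$. Restrict the decomposition to a line $m$ whose direction is not one of the $(d_i)$. Along $m$ we get $g_m(t)=c+\sum_i f_i(a_it+b_i)$ with all $a_i\neq 0$, and $g_m$ is a $0/1$ function on $\FF_p$. Its Fourier transform is controlled by the $\widehat{f_i}$, so Chebotarev's theorem, in the form of Tao's uncertainty principle for $\FF_p$ ($|\mathrm{supp}\,g|+|\mathrm{supp}\,\widehat g|\geq p+1$), relates the supports. Varying $m$ over parallel classes, the slopes $a_i$ range over many ratios. Averaging over these parallel classes shows that the Fourier mass of each $f_i$ must be concentrated in a very structured way, because otherwise generic translates of the $k$ summands would produce values outside $\{0,1\}$.

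The decisive step is a stability version of the Kiss--Somlai classification (\Cref{Thm:KS}). For three directions, the equation $f_1(x)+f_2(y)+f_3(x+y)\in\{0,1\}$ forces the $f_i$ to be ``interval/step'' functions, coming from the order-based set $K$. For $k\geq 4$ I would try to show two things: each $f_i$ must be close, in $L^1$ or in Fourier-bias terms, to such a step function; and the sum of $k\geq4$ step functions in general position cannot stay $0/1$-valued. For the latter, the sum of step functions along a generic line has many ``jumps'', at least of order $k$ per period, and these jumps cannot cancel unless the slopes satisfy additive relations. Making these quantitative means bounding exponential sums of step functions, which decay like $1/|\xi|$. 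Summing the error terms over $k$ directions is where the losses arise, and I expect them to produce the condition $k(2+\log p)\leq \sqrt p/8$: the $\log p$ comes from summing $\sum_{\xi}1/|\xi|$, and the $\sqrt p$ from a Cauchy--Schwarz/second-moment step.

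The main obstacle is this quantitative rigidity for $k\geq 4$, namely ruling out non-step $f_i$ that conspire to keep the sum $0/1$-valued. I would first try an energy argument. Compute $\sum_P \chi_S(P)(1-\chi_S(P))=0$ by expanding the square of the decomposition. This gives $\sum_{i\neq j}\langle f_i\circ\lambda_{d_i},f_j\circ\lambda_{d_j}\rangle$ plus diagonal terms. The cross terms equal products of means and hence vanish, since distinct directions give independent linear forms. What remains is an identity tying $\sum_i\|f_i\|_2^2$ to $|S|$ and $c$. I would combine this with higher moments, $\chi_S^3=\chi_S$, where the genuinely three-term correlations $\sum_x f_i f_j f_l$ along triples of directions appear. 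These correlations are additive-combinatorial averages, and bounding them via the uncertainty principle should give the needed contradiction for $k$ in the stated range.
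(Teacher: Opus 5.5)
The paper does not prove this theorem; it only quotes it from \cite{Ghidelli:Kiss:Somlai}. Judged on its own, your proposal has a genuine gap. Your opening decomposition is correct; it is \Cref{Lm:Ortho} in Fourier coordinates. The decisive step, the quantitative rigidity for $k\geq 4$, is only announced (``I would try to show'', ``I expect'', ``should give''), and the tools you name cannot supply it.

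\begin{itemize}
\item Expanding $\sum_P\chi_S(P)(1-\chi_S(P))=0$ with your decomposition gives $p\sum_i\|f_i\|_2^2=|S|-|S|^2/p^2$. This is just Parseval, i.e.\ the classical variance identity, and it holds for every set whatever its number of inequidistributed directions.
\item The summed identity $\sum_P\chi_S^3=\sum_P\chi_S$ is likewise one scalar equation satisfied by every $0/1$ function. Nothing in your sketch explains why an estimate of the three-term correlations should contradict it for $k\geq 4$, while the Kiss--Somlai sets satisfy it for $k=3$. The information lies in the pointwise constraint $\chi_S\in\{0,1\}$, which averaging destroys.
\item The uncertainty principle on a line $m$ only gives the lower bound $|\mathrm{supp}\,\widehat{g_m}|\geq p+1-|S|/p$. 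Since $\widehat{g_m}(s)$ is a sum of $k$ twisted values of the $\widehat{f_i}$, there is no reason for it to be sparse, so no constraint on the $f_i$ follows.
\item Your model of the case $k=3$ is also off. The projections of $K$ are not step functions but sawtooth functions, e.g.\ $|K\cap\{X=x\}|=p-1-\nu(x)$. These are lifts of degree-one polynomials over $\FF_p$.
\end{itemize}

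The missing idea is arithmetic: you never use that $N_i(t)=|S\cap\ell_{i,t}|$ is an integer in $\{0,\dots,p\}$. Your decomposition reads $p\,\chi_S(P)=\sum_{i=1}^kN_i(\lambda_{d_i}(P))-C$ with the integer $C=(k-1)|S|/p$. Modulo $p$ this gives the functional equation $\sum_iN_i(\lambda_{d_i}(P))\equiv C$ on $\FF_p^2$. Take finite differences along non-zero vectors $v_k,\dots,v_2$ with $\lambda_{d_j}(v_j)=0$. This kills every term except the first, so a $(k-1)$-fold difference of $N_1\bmod p$ with non-zero steps vanishes. Hence $N_1\bmod p$, and likewise each $N_i\bmod p$, is a polynomial of degree at most $k-2$. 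This is exactly the generalisation of the step in the proof of \Cref{thm:integer-three-directions}, where for $k=3$ the reductions $g_d$ come out additive (degree at most one).

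Since $0\leq N_i\leq p$, one gets $N_i=\nu\circ P_i$ with $\deg P_i\leq k-2$, up to the choice of $0$ or $p$ at zeros of $P_i$. The problem then becomes a carry problem: show that $\sum_i\nu(P_i(\lambda_{d_i}(P)))$ cannot stay in $\{C,C+p\}$ for all $P$ when $k\geq 4$. This is the natural source of the shape of the bound. When some $P_i$ has degree at least $2$, the $\sqrt p$ would come from Weil's bound $|\sum_t\omega^{P(t)}|\leq(\deg P-1)\sqrt p$, rather than from a second-moment step. The $\log p$ would come from summing the $O(1/\|s/p\|)$ Fourier coefficients of the sawtooth $\nu$ in an Erd\H{o}s--Tur\'an-type discrepancy estimate. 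The case where all $P_i$ are affine must be excluded separately. Without this polynomial structure, your proposed stability version of the Kiss--Somlai theorem has nothing to stand on.
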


Not much is known about equidistributed directions in planes other than
$\ag(2,p)$.
Beyond the classical problem of determined directions, equidistributed
directions in $\ag(2,q)$ for non-prime $q$ were studied in \cite{ASW}.
A related notion of uniformity was investigated in \cite{CSW}, where
all but a small number of lines in a parallel class are required to
meet a multiset in the same number of points modulo the characteristic.
To the best of our knowledge, equidistributed directions have not
previously been studied in non-Desarguesian affine planes.

\subsection{Main results}

In this paper, we study the direction problem in not necessarily Desarguesian affine planes.
In Section \ref{Sec:characteristic}, we use some ideas from algebraic graph theory to prove the following theorem.

\begin{thm}
 \label{Thm:No shared directions}
 Let $A_q = (\mp,\ml)$ be an affine plane of order $q$.
 Let $S$ and $T$ be integer multisets of points in $A_q$.
 Suppose that there is no direction from which both $S$ and $T$ are inequidistributed.
 Then
 \[
  \sum_{P \in \mp} \mu_S(P) \mu_T(P) = \frac{|S| |T|}{q^2}.
 \]
 If $S$ and $T$ are ordinary sets, this formula simplifies to $|S \cap T| = |S| |T| / q^2$.
 In particular, if $S$ and $T$ are disjoint point sets of size $q$, then there is a direction determined by both of them.
\end{thm}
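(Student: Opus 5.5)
Throughout, I view an integer multiset $S$ as the vector $\mu_S \in \mathbb R^{\mp}$ and work in the real vector space $V=\mathbb R^{\mp}$ with the standard inner product. The aim is to rewrite $\sum_P \mu_S(P)\mu_T(P) = \langle \mu_S,\mu_T\rangle$ by decomposing $V$ orthogonally according to the parallel classes. This is the spectral decomposition of the adjacency structure of the affine plane.

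For each direction $(d)$ I let $W_d \subseteq V$ be the space of functions constant on each line with direction $(d)$. Its dimension is $q$, and it contains the all-one vector $\one$. Set $U_d = W_d \cap \one^\perp$, which has dimension $q-1$.

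The first key step is to show that the spaces $U_d$, for $(d)\in\ell_\infty$, are pairwise orthogonal. Take lines $\ell$ and $m$ with distinct directions. Then $\langle \one_\ell,\one_m\rangle = |\ell\cap m| = 1$, while $\langle \one_\ell,\one\rangle = q$. Now let $f=\sum a_\ell \one_\ell\in U_d$ and $g=\sum b_m\one_m\in U_{d'}$ with $d\neq d'$. Then $\langle f,g\rangle = \sum a_\ell b_m = (\sum a_\ell)(\sum b_m)$. Moreover $\langle f,\one\rangle = q\sum a_\ell=0$, so $\sum a_\ell=0$ and hence $\langle f,g\rangle=0$.

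Counting dimensions, $1+(q+1)(q-1)=q^2=\dim V$. Therefore
\[
V = \vspan{\one} \oplus \bigoplus_{(d)\in\ell_\infty} U_d
\]
is an orthogonal decomposition.

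Next I characterise equidistribution in these terms: $S$ is equidistributed from $(d)$ if and only if $\mu_S \perp U_d$. The inner product of $\mu_S$ with $\one_\ell$ is $|S\cap\ell|$. So $\mu_S\perp U_d$ says exactly that $\sum a_\ell |S\cap \ell|=0$ whenever $\sum a_\ell=0$. This holds precisely when all lines of direction $(d)$ meet $S$ in the same number of points.

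Let $\pi_d$ denote the orthogonal projection onto $U_d$. Then
\[
\mu_S = \frac{|S|}{q^2}\one + \sum_{(d)} \pi_d(\mu_S),
\]
and similarly for $T$. Expanding the inner product and using orthogonality gives
\[
\langle\mu_S,\mu_T\rangle = \frac{|S||T|}{q^2} + \sum_{(d)} \langle \pi_d\mu_S,\pi_d\mu_T\rangle .
\]
By hypothesis, for every $(d)$ at least one of $\pi_d\mu_S$ and $\pi_d\mu_T$ vanishes. So the sum is zero, which proves the formula.

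The special cases are immediate. For ordinary sets, $\sum_P\mu_S(P)\mu_T(P)=|S\cap T|$. If $S$ and $T$ are disjoint sets of size $q$, the formula would give $0=q^2/q^2=1$, a contradiction. So some direction is inequidistributed for both sets. For a $q$-set, inequidistributed from $(d)$ means some line of direction $(d)$ meets the set in at least $2$ points, that is, the set determines $(d)$.

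The main point needing care is the orthogonality and dimension count. In particular I must check that $\dim W_d=q$, which holds because the indicator vectors of the $q$ parallel lines have disjoint supports. The remaining steps are routine.
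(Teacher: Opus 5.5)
Your proof is correct and follows the paper's approach. It uses the same orthogonal decomposition $\vspan{\one}\oplus\bigoplus_{(d)} V_{(d)}$ (your $U_d$ is exactly the paper's $V_{(d)}$), the same characterisation of equidistribution as orthogonality to $V_{(d)}$, and the same expansion of $\mu_S\cdot\mu_T$; the only difference is that you get spanning from a dimension count instead of the rank argument via $BB^\top=qI+J$.
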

We give an alternative proof of Theorem \ref{Thm:No shared directions} in Appendix \ref{Sec:Covariance} that avoids linear algebra and extends the classical variance method for studying a single set to a covariance method for studying pairs of sets.

\bigskip 

In Section \ref{Sec:cross}, we extend the notion of determined directions to pairs of sets. This will give a bipartite analogue of the classical direction problem: instead of considering directions determined by pairs of points inside one set, we consider directions determined by pairs of points from two possibly different sets.

\begin{df}
 Suppose that $S$ and $T$ are sets of points in an affine plane $A_q$.
 We say that $(S,T)$ \emph{cross-determines} a direction $(d)$ if there are distinct points $P \in S$ and $Q \in T$ such that the line $PQ$ has direction $(d)$.
\end{df}

We prove the following theorem.

\begin{thm}
 \label{Thm:Cross determine}
 Suppose that $S$ and $T$ are sets of points in an affine plane $A_q$ of order $q$ and the number of directions cross-determined by $(S,T)$ equals $c \leq \frac{q+1}2$.
 Then $|S| |T| \leq q^2$.
 If $|S| |T| = q^2$, then $S = T$ is a set of $q$ points determining $c$ directions.
\end{thm}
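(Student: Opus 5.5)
The plan is a spectral argument: decompose the characteristic vectors of $S$ and $T$ along the $q+1$ parallel classes, turn the absence of cross-determined directions into exact identities, and finish with Cauchy--Schwarz. Write $a=|S|$, $b=|T|$, $m=|S\cap T|$, and let $s,t\in\mathbb R^{\mp}$ be the characteristic vectors. As in Section~\ref{Sec:characteristic}, $\mathbb R^{\mp}$ splits orthogonally as $\vspan{\one}\oplus\bigoplus_{(d)}V_d$. Here $V_d$ is the space of functions that are constant on the lines of direction $(d)$ and sum to zero. Accordingly write
\[
s=\tfrac{a}{q^2}\one+\sum_{(d)}s_d,\qquad t=\tfrac{b}{q^2}\one+\sum_{(d)}t_d .
\]
Then $\sum_{\ell\parallel(d)}|S\cap\ell||T\cap\ell|=\frac{ab}{q}+q\vspan{s_d,t_d}$, and
\[
m=\vspan{s,t}=\frac{ab}{q^2}+\sum_{(d)}\vspan{s_d,t_d}.
\]
Similarly $a=\frac{a^2}{q^2}+\sum_{(d)}\|s_d\|^2$, and the same holds for $b$.

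Let $N$ be the set of directions not cross-determined by $(S,T)$, so $|N|=q+1-c\ge\frac{q+1}2$. Fix $(d)\in N$ and a line $\ell$ of direction $(d)$. If $\ell$ met both $S$ and $T$, then any $P\in S\cap\ell$ and $Q\in T\cap\ell$ would have to coincide. Hence either one of $S\cap\ell$, $T\cap\ell$ is empty, or $S\cap\ell=T\cap\ell$ is a single point of $S\cap T$. Therefore $\sum_{\ell\parallel(d)}|S\cap\ell||T\cap\ell|=m$, which gives the exact identity
\[
\vspan{s_d,t_d}=\frac{1}{q}\Bigl(m-\frac{ab}{q}\Bigr)\quad\text{for every }(d)\in N .
\]
Summing this over $N$ and subtracting from the formula for $m$ yields
\[
\sum_{(d)\notin N}\vspan{s_d,t_d}=m-\frac{ab}{q^2}-\frac{|N|}{q}\Bigl(m-\frac{ab}{q}\Bigr).
\]

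The left-hand side is bounded by Cauchy--Schwarz by $\sqrt{(A-x)(B-y)}$, where
\[
A=a-\frac{a^2}{q^2},\quad B=b-\frac{b^2}{q^2},\quad x=\sum_{N}\|s_d\|^2,\quad y=\sum_N\|t_d\|^2 .
\]
By the identity for $(d)\in N$ and Cauchy--Schwarz again, $\sqrt{xy}\ge|N|\cdot\bigl|m-\frac{ab}{q}\bigr|/q$. Then I use $\sqrt{(A-x)(B-y)}\le\sqrt{AB}-\sqrt{xy}$, which is equivalent to AM--GM applied to $Ay+Bx$. In the case $m\le ab/q$, which is what happens when $ab\ge q^2$ since $m\le\min(a,b)\le\sqrt{ab}$, these combine to roughly
\[
(2|N|-1)\frac{ab}{q^2}-(2|N|-q)\frac{m}{q}\le\sqrt{AB}\le\sqrt{ab}.
\]
For $m=0$ this gives $\sqrt{ab}\le q^2/(2|N|-1)\le q$ directly.

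For general $m$ I will argue by contradiction from $ab>q^2$. The bound $m\le\sqrt{ab}$, together with $2|N|-1\ge q$ and the factor $\sqrt{(1-a/q^2)(1-b/q^2)}<1$, should yield a strict inequality. Getting the bookkeeping of the $m$-terms right, so that $2|N|-q\ge1$ does not spoil the estimate, is the step I expect to be the main obstacle. If it resists, I would first reduce to $m=0$ by a direct combinatorial argument, or split according to the sign of $m-ab/q$.

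For the equality case, all inequalities must be tight. In particular $m$ must equal $\sqrt{ab}=q$, which forces $S=T$ with $|S|=q$. Then the cross-determined directions are exactly the directions determined by $S$, and there are $c$ of them.
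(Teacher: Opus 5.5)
\textbf{Gap at the main step.} Your identities are all correct: the per-direction identity $\langle s_d,t_d\rangle=\frac1q(m-\frac{ab}{q})$ for $(d)\in N$, both Cauchy--Schwarz steps, $\sqrt{(A-x)(B-y)}\le\sqrt{AB}-\sqrt{xy}$, and the observation that $ab\ge q^2$ forces $m\le ab/q$. But the argument stops at exactly the step that proves the theorem, and the tool you propose for it is too weak. Start from
\[
(2|N|-1)\frac{ab}{q^2}-(2|N|-q)\frac mq\le\sqrt{AB},
\]
and use $m\le u:=\sqrt{ab}$ together with only $\sqrt{AB}\le u$ (or merely that $\sqrt{(1-a/q^2)(1-b/q^2)}<1$). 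Then you get only $u\le\frac{2|N|q}{2|N|-1}$, which equals $q+1$ when $|N|=\frac{q+1}2$, so $q^2<ab\le(q+1)^2$ is not excluded. Likewise, your equality argument (``all inequalities must be tight'') refers to a chain you have not established. The fallback of reducing to $m=0$ combinatorially is also not evident, since discarding $S\cap T$ shrinks $|S||T|$.

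\textbf{Repair, and comparison with the paper.} The missing ingredient is the quantitative AM--GM estimate $(q^2-a)(q^2-b)\le(q^2-u)^2$, i.e.\ $\sqrt{AB}\le u-\frac{u^2}{q^2}$. This is the same inequality the paper uses in the proof of Theorem~\ref{Thm:Cross coclique}. Inserting it gives
\[
\frac{2|N|}{q^2}\,u^2\le u+\frac{2|N|-q}{q}\,m\le\frac{2|N|}{q}\,u,
\]
where the second step uses $2|N|-q\ge1$ and $m\le u$. Hence $u\le q$, and the $m$-terms cancel exactly. At $u=q$ the first inequality reads $(2|N|-q)(1-\frac mq)\le0$, so $q\le m\le\min(a,b)\le\sqrt{ab}=q$. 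This forces $a=b=m=q$, hence $S=T$. The strict inequality $2|N|>q$, i.e.\ $c\le\frac{q+1}2$, is used precisely here; it matches the condition $\lambda_n<-\lambda_2$ in the paper. With this repair your proof is a valid alternative. The paper instead forms the Latin-square-type strongly regular graph on the points, whose edges come from lines with non-cross-determined directions, and observes that $(S,T)$ is a cross-coclique. It then applies the Hoffman-type bound of Theorem~\ref{Thm:Cross coclique}, in which $|S\cap T|$ never appears. Your version avoids identifying that graph and its spectrum, at the cost of having to control $m$ explicitly.
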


We show that, in every Desarguesian affine plane, the hypothesis
$c\leq\frac{q+1}{2}$ is sharp for the conclusion that equality
forces $S=T$.

\bigskip

In Section \ref{Sec:3Directions}, we classify integer multisets with exactly three
inequidistributed directions in Desarguesian affine planes. 
After a suitable affine transformation, the multiplicity function
of every such multiset is an integer linear combination of the
multiplicity functions of lines in the three inequidistributed
directions, possibly plus the multiplicity function of a lift
of the Kiss--Somlai set.

For ordinary point sets in $\ag(2,q)$, this yields the following theorem.

\begin{thm}
 \label{Thm:Main:Lift}
 Let $p$ be a prime number and $q = p^h$ a power of $p$.
 Let $\Tr:\FF_q\to\FF_p$ be the absolute trace function.
 Recall the natural total ordering $<$ on the elements of $\FF_p$.
 Then the set 
 \[
  L = \sett{(x,y) \in \FF_q^2}{\Tr(x) < \Tr(y)}
 \]
 of points in $\ag(2,q)$ is inequidistributed from exactly 3 directions: $(0)$, $(1)$, and $(\infty)$.
 Moreover, every set that is inequidistributed from exactly 3 directions in $\ag(2,q)$ is affinely equivalent to the union of $L$ and some parallel lines that are disjoint from $L$.
\end{thm}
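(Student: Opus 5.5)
The plan has two parts: first verify that $L$ has exactly the three stated inequidistributed directions, then classify all sets with exactly three. Throughout, write $\pi:\FF_q^2\to\FF_p^2$, $(x,y)\mapsto(\Tr(x),\Tr(y))$, which is a surjective $\FF_p$-linear map all of whose fibres have size $q^2/p^2$. By construction $L=\pi^{-1}(K)$, with $K$ the Kiss--Somlai set of Theorem \ref{Thm:KS}.

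\emph{Step 1: $L$ has exactly the directions $(0)$, $(1)$, $(\infty)$.}
\begin{itemize}
\item For these three directions the line $y=mx+b$ with $m\in\{0,1\}$, or $x=b$, projects onto a line of $\AG(2,p)$. Take for example $y=x+b$: the intersection $|L\cap \ell|$ equals the number of $x$ with $\Tr(x)<\Tr(x)+\Tr(b)$, computed in the integers.
\item This count depends on $\Tr(b)$. It vanishes for $\Tr(b)=0$, so the direction is inequidistributed; the cases $m=0$ and $x=b$ are similar.
\item For any other slope $m$, the map $x\mapsto(\Tr(x),\Tr(mx+b))$ from $\FF_q$ to $\FF_p^2$ has image an affine $\FF_p$-subspace. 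If that image is all of $\FF_p^2$, the fibres are equal and $|L\cap\ell|=(q/p^2)|K|$, independent of $b$.
\item Otherwise $\Tr(mx)$ is an affine function of $\Tr(x)$, which forces $m\in\FF_p$. For $m\in\FF_p\setminus\{0,1\}$ the image is a line of $\AG(2,p)$ with slope $m$, and equidistribution follows from Theorem \ref{Thm:KS}.
\end{itemize}

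\emph{Step 2: reduce an arbitrary set $S$ to $L$ or its complement.}
\begin{itemize}
\item Let $S$ be inequidistributed from exactly three directions. Since $\GL(2,q)$ acts transitively on triples of points of $\ell_\infty$, we may assume these directions are $(0)$, $(1)$, $(\infty)$.
\item Let $V$ be the $\QQ$-space of functions $\mp\to\QQ$ that are equidistributed from every other direction. Using the characters of $\FF_q^2$, a function is equidistributed from $(d)$ exactly when its Fourier transform vanishes on the nontrivial characters $\chi_u$ with $u\perp d$, minus the trivial one.
\item So $V$ consists of the functions whose Fourier support lies in the three lines $u\perp(1,0)$, $u\perp(0,1)$, $u\perp(1,-1)$ through the origin of the dual.
\item The integer points of $V$ are then classified by combining integer combinations of indicator functions of lines in the three directions with the lift $\mu_L$. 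I would prove this by a Galois-orbit argument: rationality forces the Fourier coefficients to be constant on orbits under $\mathrm{Gal}(\QQ(\zeta_p)/\QQ)$. This is where the trace, and hence $L$, enters. I expect this lattice description to be the main obstacle, and would first prove it for $p$ prime, where it recovers Kiss--Somlai, and then for $\FF_p^{2h}$.
\end{itemize}

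\emph{Step 3: from multisets back to ordinary sets.} Write
\[
\mu_S=a\mu_L+\sum f_i(\text{lines}),
\]
with $a\in\ZZ$ and integer line coefficients, after an affine transformation. Requiring $\mu_S\in\{0,1\}$ pins things down:
\begin{itemize}
\item The coefficient $a$ must lie in $\{0,\pm1\}$.
\item If $a=0$, then $S$ is a $\{0,1\}$ combination of lines from three parallel classes. Such a set is equidistributed from too many directions, so it is a union of parallel lines in one class, which has only one inequidistributed direction. This contradicts having exactly three.
\item If $a=\pm1$, then the line terms, evaluated on the fibres of $\pi$, must only fill in lines disjoint from $L$, since any line meeting $L$ would push some multiplicity outside $\{0,1\}$.
\item Replacing $L$ by its complement when $a=-1$, and noting that the complement of $L$ is affinely equivalent to $L$ together with parallel lines (namely the points with $\Tr(x)=\Tr(y)$, after the map $(x,y)\mapsto(y,x)$), completes the classification.
\end{itemize}
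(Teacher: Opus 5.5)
Your Step 1 is correct; it is essentially Proposition \ref{prop:lift-directions} combined with Theorem \ref{Thm:KS}. The classification, however, has two genuine gaps.

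\emph{Step 2 is only announced, not proved, and it carries all the content; it is the paper's Theorem \ref{thm:integer-three-directions}.} Locating the Fourier support of $V$ only recovers Lemma \ref{Lm:Ortho}, namely $\mu_S(x,y)=c+h_\infty(x)+h_0(y)+h_1(x-y)$ with rational-valued $h_d$. Rationality is automatic here, and it makes the Fourier coefficients Galois-conjugate along orbits, not constant. So a Galois-orbit argument does not touch the real constraint, which is \emph{integrality}: the denominators of the $h_d$. The missing idea is the following.
Counting $S$ on the $q+1$ lines through $(x,y)$ gives $q\mu_S(x,y)=f^*_\infty(x)+f^*_0(y)+f^*_1(x-y)-\lambda$, with integer intersection numbers $f^*_d$ and an integer $\lambda$. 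Reducing modulo $q$ shows that $t\mapsto[f^*_1(t)-f^*_1(0)]_q$ is an additive map $\FF_q\to\ZZ/q\ZZ$. This map is killed by $p$, so it equals $t\mapsto[\frac{q}{p}\,\nu(\Tr(at))]_q$ for some $a\in\FF_q$; this is where the trace enters.
It also shows that the invariant is $a\in\FF_q$, not an integer coefficient of $\mu_L$. Your claim that $\{0,1\}$-valuedness forces that coefficient into $\{0,\pm1\}$ is therefore ill-posed. Since $p\mu_L\in\ZZ\ml(D)$, the coefficient is only defined modulo $p$. For $p\ge5$ the ordinary set $\{(x,y):\nu(\Tr(2x))<\nu(\Tr(2y))\}$ has coefficient $2$. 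Reaching coefficient $0$ or $1$ comes from scaling, not from the multiplicities.

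\emph{Step 3 is false as stated.} Line coefficients may be negative and cancel, so $\{0,1\}$-valuedness does not force the line terms to add only lines disjoint from $L$. Take $q=p=3$, so $L=K=\{(0,1),(0,2),(1,2)\}$. Then $\mu_{K+(1,0)}=\mu_K+\mu_{\{X=Y\}}-\mu_{\{X=0\}}$, yet $K+(1,0)=\{(1,1),(1,2),(2,2)\}$ is not $K$ together with lines. A further affine map must therefore be found, and your argument gives no way to find it. Likewise, in the coefficient-$0$ case nothing shows that a $\{0,1\}$-valued element of $\ZZ\ml(D)$ is a $\{0,1\}$-combination of lines. Even for $q=p$, the lattice description alone does not yield the Kiss--Somlai classification of sets.

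The paper avoids this as follows. If $T$ contains a line, that direction is inequidistributed. Removing all lines of $T$ parallel to it leaves a line-free $T_1$ with the same three inequidistributed directions. Line-freeness gives $0\le f^*_d(t)\le q-1$. Combined with $f^*_d(t)\equiv f^*_d(0)\pm\frac{q}{p}\nu(\Tr(at))\pmod q$, this excludes $a=0$. After scaling to $a=1$, it makes each correction term $A_d(t)$ a function of $\Tr(t)$ alone. Hence $T_1=\Lift(K')$ for some $K'\subseteq\FF_p^2$ with exactly three inequidistributed directions. Theorem \ref{Thm:KS} in $\AG(2,p)$ together with Lemma \ref{lm:lift-equivariance} then finishes the proof.
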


We explain how this construction and the classification result carry over to all translation planes.

\section{Equidistribution and characteristic functions}
 \label{Sec:characteristic}

The multiplicity function $\mu_S$ of an integer multiset $S$ is a vector in the vector space $\QQ^\mp$ of functions $\mp \to \QQ$.
In particular, for any line $\ell$, we have
\[
 \mu_\ell: \mp \to \QQ: P \mapsto \begin{cases}
  1 & \text{if } P \in \ell, \\
  0 & \text{otherwise}.
 \end{cases}
\]
We give an orthogonal decomposition of $\QQ^\mp$ according to the parallel classes of the affine plane.
This decomposition gives a useful way to detect equidistributed directions, leading to Theorem \ref{Thm:No shared directions}, and will later be used to transfer constructions between affine planes sharing some parallel classes.
The same ideas were used by Kiss and Somlai \cite[Proposition 5.1]{Kiss:Somlai}.

We denote the constant function taking 1 everywhere by $\one$ and the zero function by $\zero$.
We endow $\QQ^\mp$ with the standard scalar product given by
\[
 f \cdot g = \sum_{P \in \mp} f(P) g(P).
\]
If $\ell$ has direction $(d)$, we write $\dir(\ell) = (d)$.

\begin{lm}
 \label{Lm:Ortho}
 Let $A_q = (\mp,\ml)$ be an affine plane of order $q$, and $\ell_\infty$ be the line at infinity.
 For every $(d) \in \ell_\infty$ define $V_{(d)} = \vspann{\mu_\ell - \frac 1q \one}{\ell \in \ml,\ \dir(\ell) = (d)}$.
 \begin{enumerate}
  \item \label{Lm:Ortho:1} $\vspan \one$ and the spaces $V_{(d)}$ for $(d) \in \ell_\infty$ form an orthogonal decomposition of $\QQ^\mp$.
  \item \label{Lm:Ortho:2} Let $S$ be an integer multiset of points and $(d)$ a direction.
  Then $\mu_S \perp V_{(d)}$ if and only if $S$ is equidistributed from $(d)$.
  Thus, if $D$ is the set of inequidistributed directions of $S$, then $\mu_S \in \vspann{\one,V_{(d)}}{(d) \in D}$.
 \end{enumerate}
\end{lm}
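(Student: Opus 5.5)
The plan is to verify orthogonality by a direct inner-product computation and then obtain the decomposition of all of $\QQ^\mp$ from a dimension count.

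\textbf{Orthogonality.} First, for a line $\ell$ we have $(\mu_\ell - \frac1q\one)\cdot\one = q - \frac{q^2}{q} = 0$, so each $V_{(d)}$ is orthogonal to $\one$. Next take lines $\ell,m$ with distinct directions. They meet in exactly one point, so $\mu_\ell\cdot\mu_m = 1$. Using $\mu_\ell\cdot\one = q$ and $\one\cdot\one = q^2$, expanding gives
\[
 \left(\mu_\ell - \tfrac1q\one\right)\cdot\left(\mu_m - \tfrac1q\one\right) = 1 - 1 - 1 + 1 = 0 .
\]
Hence $V_{(d)}\perp V_{(e)}$ whenever $(d)\neq(e)$.

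\textbf{Dimension count.} The $q$ lines of a parallel class partition $\mp$, so their characteristic vectors have disjoint supports and are linearly independent. Their span has dimension $q$ and contains $\one$, which is their sum. The vectors $\mu_\ell - \frac1q\one$ lie in the orthogonal complement of $\one$ within this span, and together with $\one$ they span the whole $q$-dimensional space. Therefore $\dim V_{(d)} = q-1$. The scalar product is positive definite over $\QQ$, so the mutually orthogonal subspaces form a direct sum. Its total dimension is $1 + (q+1)(q-1) = q^2 = \dim\QQ^\mp$, which proves part \ref{Lm:Ortho:1}.

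\textbf{Part \ref{Lm:Ortho:2}.} We have $\mu_S\cdot(\mu_\ell - \frac1q\one) = |S\cap\ell| - \frac{|S|}{q}$. So $\mu_S\perp V_{(d)}$ holds exactly when every line with direction $(d)$ meets $S$ in $|S|/q$ points, i.e.\ when $S$ is equidistributed from $(d)$. For the last claim, decompose $\mu_S$ along the orthogonal decomposition of part \ref{Lm:Ortho:1}. Its component in $V_{(d)}$ vanishes for every equidistributed direction $(d)$, which leaves $\mu_S \in \vspann{\one,V_{(d)}}{(d)\in D}$.

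The only step needing any care is the dimension count, which relies on each parallel class partitioning $\mp$. Everything else is routine.
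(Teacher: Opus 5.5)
Your proof is correct. The orthogonality computation and part~2 coincide with the paper's argument. The difference lies in how you show that $\vspan{\one}$ and the spaces $V_{(d)}$ exhaust $\QQ^\mp$.

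The paper observes that $\vspann{\one, V_{(d)}}{(d)\in\ell_\infty}$ is the column space of the point--line incidence matrix $B$. It then invokes the identity $BB^\top = qI + J$, which is nonsingular, so $B$ has rank $q^2$. This step uses the fact that any two points lie on exactly one line.

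You instead compute $\dim V_{(d)} = q-1$ from the fact that each parallel class partitions $\mp$. You then use positive-definiteness to make the orthogonal sum direct and count $1+(q+1)(q-1)=q^2$.

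Your route is fully elementary and self-contained, with no appeal to the Gram-matrix identity or its spectrum. It also yields the dimensions of the summands as a by-product. For instance, the same count shows that the lines of any $b\geq 1$ parallel classes span a subspace of dimension $1+b(q-1)$.

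The paper's route is shorter once the standard identity is granted. It is the familiar Fisher-type rank argument, which works for any $2$-design.
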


\begin{proof}
 (1) First let us check that these subspaces are mutually orthogonal.
 For every line $\ell$,
\[
 \left(\mu_\ell - \frac 1q \one \right) \cdot \one = q - \frac{q^2}q = 0,
\]
hence $\vspan \one$ is orthogonal to all spaces $V_{(d)}$.
Now take two affine lines $\ell_1$ and $\ell_2$ with different directions.
Then they have one point in common, hence
\[
 \left(\mu_{\ell_1} - \frac 1q \one \right) \cdot \left(\mu_{\ell_2} - \frac 1q \one \right) = \mu_{\ell_1} \cdot \left(\mu_{\ell_2} - \frac 1q \one \right) - \frac 1 q \one \cdot \left(\mu_{\ell_2} - \frac 1q \one \right) = \left(1 - \frac 1 q q \right) - 0 = 0.
\]
This shows that the spaces $V_{(d)}$ are mutually orthogonal.

It remains to show that $\vspann{\one, V_{(d)}}{(d) \in \ell_\infty} = \QQ^\mp$. 
First note that $\vspann{\one, V_{(d)}}{(d) \in \ell_\infty} = \vspann{\mu_\ell}{\ell \in \ml}$ is the column space of the point-line incidence matrix $B$ of $A_q$.
It is well known that $B B^\top = q I + J$ with $J$ the all-one matrix, which implies that $B$ has rank $q^2$, thus its column space is $\QQ^\mp$.

\bigskip

(2) The integer multiset $S$ is equidistributed from direction $(d)$ if and only if for all affine lines $\ell \ni (d)$ we have
\[
 \left( \mu_\ell - \frac 1q \one \right) \cdot \mu_S = \frac{|S|}q - \frac 1 q |S| = 0,
\]
which is equivalent to $\mu_S \in V_{(d)}^\perp$.
The statement now follows from (1).
\end{proof}


We now apply the decomposition from Lemma~\ref{Lm:Ortho} to pairs of point sets whose inequidistributed directions are disjoint.

\bigskip

\noindent {\bf Theorem \ref{Thm:No shared directions}.}
{\it  Let $A_q = (\mp,\ml)$ be an affine plane of order $q$.
 Let $S$ and $T$ be integer multisets of points in $A_q$.
 Suppose that there is no direction from which both $S$ and $T$ are inequidistributed.
 Then
 \[
  \sum_{P \in \mp} \mu_S(P) \mu_T(P) = \frac{|S| |T|}{q^2}.
 \]
 If $S$ and $T$ are ordinary sets, this formula simplifies to $|S \cap T| = |S| |T| / q^2$.
 In particular, if $S$ and $T$ are disjoint point sets of size $q$, then there is a direction determined by both of them. }

\begin{proof}
 Let $\mu_S^*$ and $\mu_T^*$ denote the orthogonal projections of $\mu_S$ and $\mu_T$ onto $\vspan \one ^\perp$.
 Then
 \[
  \mu_S = \frac{\mu_S \cdot \one}{\one \cdot \one} \one + \mu_S^* = \frac{|S|}{q^2} \one + \mu_S^*.
 \]
 Similarly, $\mu_T = \frac{|T|}{q^2} \one + \mu_T^*$.
 By Lemma \ref{Lm:Ortho}, $\mu_S^*$ and $\mu_T^*$ are orthogonal.
 It follows that
 \[
  \sum_{P \in \mp} \mu_S(P) \mu_T(P) = \mu_S \cdot \mu_T = \left(\frac{|S|}{q^2} \one + \mu_S^*\right) \cdot \left(\frac{|T|}{q^2} \one + \mu_T^*\right ) = \frac{|S| |T|}{q^2}.
 \]
 The remainder of the theorem statement is an immediate consequence.
\end{proof}

Lemma \ref{Lm:Ortho} also allows us to compare equidistributed directions in affine planes which share some parallel classes. This will be used in Section \ref{Sec:3Directions}.

\begin{prop}
 \label{Prop:Shared directions}
 Suppose that $A_q = (\mp,\ml)$ and $A'_q = (\mp,\ml')$ are affine planes of order $q$ on the same point set.
 Suppose that $\ml$ and $\ml'$ share some parallel classes of lines.
 We may assign the same direction to each shared parallel class of lines in $A_q$ and $A_q'$; let $D$ denote the set of shared directions.
 Let $S$ be an integer multiset of points.
 Suppose that in $A_q$, the inequidistributed directions of $S$ are a subset $D' \subseteq D$.
 Then also in $A'_q$, $D'$ is the set of inequidistributed directions of $S$.
\end{prop}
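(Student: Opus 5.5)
The plan is to work entirely with the orthogonal decomposition of Lemma \ref{Lm:Ortho}, applied to both planes on the common point set $\mp$. For each shared direction $(d) \in D$, the parallel class of $(d)$ is the same in $\ml$ and $\ml'$. Hence the subspace $V_{(d)}$ of $\QQ^\mp$ defined via $A_q$ coincides with the subspace $V'_{(d)}$ defined via $A'_q$, since both are spanned by the same vectors $\mu_\ell - \frac1q \one$. The space $\vspan\one$ is of course also common to both planes.

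First, by Lemma \ref{Lm:Ortho}(\ref{Lm:Ortho:2}) applied in $A_q$, we have $\mu_S \in \vspann{\one, V_{(d)}}{(d) \in D'}$. Because $D' \subseteq D$, this subspace equals $\vspann{\one, V'_{(d)}}{(d) \in D'}$, computed in $A'_q$.

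Next, take any direction $(e)$ of $A'_q$ not in $D'$. By Lemma \ref{Lm:Ortho}(\ref{Lm:Ortho:1}) applied to $A'_q$, the space $V'_{(e)}$ is orthogonal to $\one$ and to all $V'_{(d)}$ with $(d) \neq (e)$, in particular to those with $(d) \in D'$. Hence $\mu_S \perp V'_{(e)}$. By Lemma \ref{Lm:Ortho}(\ref{Lm:Ortho:2}) in $A'_q$, $S$ is therefore equidistributed from $(e)$ in $A'_q$. This holds whether $(e)$ is a shared direction outside $D'$ or a direction of $A'_q$ not shared at all.

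Conversely, take $(d) \in D'$. Then $S$ is inequidistributed from $(d)$ in $A_q$. Whether $S$ is equidistributed from $(d)$ depends only on the intersection numbers of $S$ with the lines of the parallel class of $(d)$, and that class is identical in both planes. So $(d)$ is also inequidistributed for $S$ in $A'_q$.

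Combining the two directions of the argument, the set of inequidistributed directions of $S$ in $A'_q$ is exactly $D'$. The only point needing care is the identification $V_{(d)} = V'_{(d)}$ for shared classes and the correct use of orthogonality in $A'_q$ rather than in $A_q$. This is routine, so I expect no real obstacle.
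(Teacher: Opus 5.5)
Your proposal is correct and takes essentially the paper's approach: the paper's proof just states that the proposition follows directly from Lemma~\ref{Lm:Ortho}. You spell out that derivation, namely identifying $V_{(d)}$ with $V'_{(d)}$ for shared classes and using the orthogonality of the decomposition in $A'_q$, and you also check the reverse inclusion that the paper leaves implicit: a direction in $D'$ stays inequidistributed in $A'_q$ because its parallel class is unchanged.
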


\begin{proof}
 This follows directly from Lemma \ref{Lm:Ortho}.
\end{proof}

\begin{rmk}
 The results in this section can also be proven with elementary counting arguments.
 We refer the reader to Appendix \ref{Sec:Covariance}.
 There, we extend the classical variance method from design theory to a covariance method.
\end{rmk}

\section{Cross-determined directions}
\label{Sec:cross}

In this section, we extend the notion of determined directions to pairs of point
sets and prove Theorem~\ref{Thm:Cross determine}. This gives a bipartite analogue of the classical direction problem.

\subsection{The Hoffman ratio bound for cross-cocliques}

The proof of Theorem~\ref{Thm:Cross determine} uses a Hoffman ratio-type bound for cross-cocliques.
A \emph{cross-coclique} in a graph $G$ is a pair $(S,T)$ of sets of vertices such that no vertex in $S$ is adjacent to a vertex in $T$.
We allow $S \cap T$ to be non-empty, in which case it must be a coclique. 
The bound we use is known.
The proof can be found e.g.\ in \cite[Theorem 13]{Ellis_et_al}, but can also be obtained by applying the bipartite expander mixing lemma to the bipartite double of $G$, together with the AM-GM inequality.
However, we need a stronger characterisation result in the case of equality, so we include a full proof here.

\begin{prop}
 \label{Prop:General CS}
 Let $B$ be a symmetric real matrix with distinct eigenvalues $\mu_1 > \dots > \mu_d$.
 Then for any two vectors $x$ and $y$, 
  \[
   |x^\top B y| \leq \max\{|\mu_1|, |\mu_d|\} \| x \| \| y \|.
  \]
 If $\mu_d< - \mu_1$, then equality holds if and only if $x$ and $y$ are linearly dependent $\mu_d$-eigenvectors of $B$ or one of the vectors is $\zero$.
\end{prop}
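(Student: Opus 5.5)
We expand $x$ and $y$ in an orthonormal eigenbasis of $B$. Since $B$ is real symmetric, the spectral theorem gives an orthonormal basis $v_1,\dots,v_n$ of eigenvectors, with $Bv_i = \lambda_i v_i$ and each $\lambda_i \in \{\mu_1,\dots,\mu_d\}$. Write $x = \sum_i a_i v_i$ and $y = \sum_i b_i v_i$. Then
\[
 x^\top B y = \sum_i \lambda_i a_i b_i ,
\]
and we chain three inequalities. Put $M = \max\{|\mu_1|,|\mu_d|\}$, which equals $\max_i |\lambda_i|$ because $\mu_1$ and $\mu_d$ are the extreme eigenvalues. Then
\[
 |x^\top B y| \leq \sum_i |\lambda_i| |a_i| |b_i| \leq M \sum_i |a_i||b_i| \leq M \|a\| \|b\| = M \|x\| \|y\|,
\]
using the triangle inequality, the bound $|\lambda_i| \leq M$, and Cauchy--Schwarz, with orthonormality giving $\|a\| = \|x\|$ and $\|b\| = \|y\|$. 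This proves the inequality.

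For the equality case, assume $\mu_d < -\mu_1$, so $M = |\mu_d| = -\mu_d > 0$. The converse direction is a direct check. If one vector is $\zero$, both sides vanish. If $x$ and $y$ are dependent $\mu_d$-eigenvectors, say $y = cx$ with $Bx = \mu_d x$, then
\[
 |x^\top B y| = |c\,\mu_d|\,\|x\|^2 = M \|x\|\|y\|.
\]

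For the forward direction, suppose $x, y \neq \zero$ and equality holds. Then each of the three inequalities in the chain is an equality.
\begin{itemize}
 \item The second is an equality only if $|\lambda_i| = M$ whenever $a_i b_i \neq 0$.
 \item The third (Cauchy--Schwarz) is an equality only if the vectors $(|a_i|)_i$ and $(|b_i|)_i$ are proportional. Since both $x$ and $y$ are nonzero, the constant of proportionality is positive, so the sets $\{i : a_i \neq 0\}$ and $\{i : b_i \neq 0\}$ coincide.
\end{itemize}
Hence every index in the common support has $|\lambda_i| = M = -\mu_d$. The strict hypothesis $\mu_d < -\mu_1$ gives $|\mu_j| < M$ for every $j < d$, since $\mu_d < \mu_j \leq \mu_1 < -\mu_d$. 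So each such $\lambda_i$ equals $\mu_d$, and $x, y$ are both $\mu_d$-eigenvectors.

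It remains to get linear dependence from the triangle-inequality step, which is the most delicate part. On the support, $\lambda_i a_i b_i = \mu_d a_i b_i$, and since $\mu_d$ is a common nonzero factor, equality in the triangle inequality forces all products $a_i b_i$ to have the same sign. Combined with $|b_i| = t|a_i|$ for a fixed $t > 0$, this gives $b_i = \pm t a_i$ with one sign for all $i$. Therefore $y = \pm t x$, and $x, y$ are linearly dependent. The main obstacle is exactly this bookkeeping: one must track the signs from the triangle-inequality step together with the proportionality from Cauchy--Schwarz. It is also the point where the strictness of $\mu_d < -\mu_1$ matters. Without it, $\mu_1$-eigencomponents could also attain $M$, and the eigenvector conclusion would fail.
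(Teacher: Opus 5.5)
Your proof is correct and follows essentially the paper's argument: decompose along the eigenspaces of $B$, apply the triangle inequality, bound each eigenvalue by the extreme one, use Cauchy--Schwarz, and read off the equality case from each step. The only difference is that you work with coordinates in an orthonormal eigenbasis rather than with the projections onto the eigenspaces. As a result, you get linear dependence from the common sign of the products $a_ib_i$ together with $|b_i| = t|a_i|$, whereas the paper gets it from equality in Cauchy--Schwarz for $x_d^\top y_d$.
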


\begin{proof}
 We will suppose that $x$ and $y$ are non-zero, otherwise the claim is trivially true.
 Since $B$ is symmetric, its eigenspaces form an orthogonal decomposition.
 Suppose that $x = \sum_{i=1}^d x_i$ and $y = \sum_{i=1}^d y_i$ with $x_i$ and $y_i$ in the $\mu_i$-eigenspace of $B$ for each $i$.
 Then
 \[
  |x^\top B y| = \left| \sum_{i=1}^d \mu_i x_i^\top y_i \right| \leq \sum_{i=1}^d |\mu_i| | x_i^\top y_i | \leq \left( \max_{i=1}^d |\mu_i| \right) \sum_{i=1}^d \| x_i \| \|y_i \|,
 \]
 where the last inequality holds by the Cauchy-Schwarz inequality.
 Note that we have $\max_{i=1}^d |\mu_i| = \max\{|\mu_1|, |\mu_d|\}$.
 Since the vectors $x_i$ are pairwise orthogonal, we have that $\sum_{i=1}^d \|x_i\|^2 = \| x \|^2$ and likewise $\sum_{i=1}^d \|y_i\|^2 = \|y\|^2$.
 By applying the Cauchy-Schwarz inequality to the vectors $(\|x_1\|, \dots, \|x_d\|)$ and $(\|y_1\|, \dots, \|y_d\|)$, we see that $\sum_{i=1}^d \|x_i\| \|y_i\| \leq \|x\| \|y\|$.
 This proves the inequality from the statement.

 Now suppose that $\mu_d < - \mu_1$, which means that $\max\{|\mu_1|, |\mu_d|\} = - \mu_d$, and also assume that equality holds.
 Then we have $\sum_{i=1}^d |\mu_i| | x_i^\top y_i | = -\mu_d \sum_{i=1}^d \| x_i \| \|y_i \|$.
 This means that $\|x_i \| \| y_i\| = 0$ whenever $i < d$.
 Moreover, the Cauchy-Schwarz inequality holds for the vectors $(\|x_1\|, \dots, \|x_d\|)$ and $(\|y_1\|, \dots, \|y_d\|)$ with equality, which means that $\|x_i\| = \|y_i\| = 0$ whenever $i < d$.
 It follows that $x$ and $y$ are $\mu_d$-eigenvectors of $B$, i.e.\ $x = x_d$ and $y = y_d$.
 Lastly, we must have that $| x_d^\top y_d | = \| x_d \| \| y_d \|$, which implies that $x_d$ and $y_d$ are linearly dependent.
Conversely, if $x$ and $y$ are linearly dependent $\mu_d$-eigenvectors, then clearly $|x^\top B y| = |\mu_d| \|x\| \|y\|$.
\end{proof}

\begin{thm}
 \label{Thm:Cross coclique}
 Let $G$ be a $k$-regular graph on $n$ vertices, $k>0$, with eigenvalues of the adjacency matrix given by $\lambda_1 \geq \dots \geq \lambda_n$ (counted with multiplicity).
 Let $\lambda = \max\{|\lambda_2|, |\lambda_n|\}$.
 If $(S,T)$ is a cross-coclique in $G$ then
 \[
  |S| |T| \leq \left( \frac{n}{\frac{k}{\lambda } + 1} \right)^2.
 \]
 If $\lambda_n < -\lambda_2$ and equality holds in the bound, then $S = T$.
\end{thm}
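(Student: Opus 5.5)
Let $A$ be the adjacency matrix of $G$, and write $\chi_S,\chi_T$ for the characteristic vectors of $S$ and $T$, with $s=|S|/n$ and $t=|T|/n$. The cross-coclique condition says exactly that $\chi_S^\top A \chi_T = 0$. The plan is to decompose both vectors along the all-one vector: $\chi_S = s\one + x$ and $\chi_T = t\one + y$ with $x,y\perp\one$. Since $A\one = k\one$ and $A$ is symmetric, the cross terms vanish, and
\[
 0 = \chi_S^\top A \chi_T = kst\,n + x^\top A y,
\]
so $|x^\top A y| = kstn$. Also $\|x\|^2 = |S| - s^2 n = ns(1-s)$, and likewise $\|y\|^2 = nt(1-t)$.

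Next I apply Proposition~\ref{Prop:General CS} to $B$ equal to the restriction of $A$ to $\one^\perp$. Concretely, I take $B = A - \frac kn J$, which agrees with $A$ on $\one^\perp$ and kills $\one$. Its eigenvalues are those of $A$ on $\one^\perp$ together with $0$. On $\one^\perp$ the eigenvalues of $A$ are $\lambda_2,\dots,\lambda_n$; even if $k$ has multiplicity greater than one this is fine, since we only need the bound by $\lambda$ on vectors in $\one^\perp$. This gives
\[
 kstn \le \lambda n \sqrt{s(1-s)t(1-t)}.
\]
By AM--GM, $\sqrt{(1-s)(1-t)} \le 1 - \sqrt{st}$. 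Writing $u = \sqrt{st}$, we get $k u^2 \le \lambda u(1-u)$, hence $u \le \lambda/(k+\lambda)$, i.e.\ $|S||T| = n^2u^2 \le \big(n/(k/\lambda+1)\big)^2$. (If $\lambda = 0$, or if $S$ or $T$ is empty, the statement is trivial or degenerate and I treat it separately.)

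For the equality case, assume $\lambda_n < -\lambda_2$ and that equality holds. Then AM--GM must be tight, so $s = t$ and $|S| = |T|$. In addition, $|x^\top B y| = \lambda\|x\|\|y\|$ must hold. The subtle point is making sure that the extreme eigenvalue of $B$ is $\lambda_n$ and that the hypothesis of the proposition, namely smallest eigenvalue $< -$largest, is satisfied. Since $B$ has eigenvalues $\lambda_2,\dots,\lambda_n$ and $0$ (from $\one$), its largest eigenvalue is $\max(\lambda_2,0)$. Because $\lambda_n < -\lambda_2$ and $\lambda_n < 0$ (the trace is zero and $k > 0$), the hypothesis holds. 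The proposition then says $x$ and $y$ are linearly dependent $\lambda_n$-eigenvectors (both are nonzero unless $s\in\{0,1\}$, which is excluded by $k>0$ and equality). Since $x^\top A y = -kstn < 0$ and $\lambda_n < 0$, we have $y = cx$ with $c > 0$, and then $\|x\| = \|y\|$ (because $s = t$) forces $c = 1$. Hence $\chi_S - s\one = \chi_T - t\one$ with $s = t$, so $\chi_S = \chi_T$ and $S = T$.

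The main obstacle is this bookkeeping in the equality case: the eigenvalue $0$ contributed by $\one$ in $B$, possible repeated $k$, and the sign and scalar of the dependence. I would handle it exactly as above.
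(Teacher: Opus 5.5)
Your proof is correct and follows the paper's argument: split off the $\one$-component, apply Proposition~\ref{Prop:General CS} to $A-\frac kn J$, use AM--GM, and in the equality case obtain $|S|=|T|$ together with linearly dependent $\lambda_n$-eigenvectors. The only divergence is how you exclude $y=-x$. The paper uses a combinatorial case analysis (zeros of $x$, $|S|=n/2$, disconnectedness, Perron--Frobenius), whereas you note that $x^\top A y=-kstn<0$ and $\lambda_n<0$ force a positive scalar, which is a valid and shorter argument; you also explicitly check that the extra eigenvalue $0$ of $A-\frac kn J$ does not break the hypothesis $\mu_d<-\mu_1$, which the paper leaves implicit.
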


\begin{proof}
 Let $\mu_S$ and $\mu_T$ be the multiplicity functions of the sets $S$ and $T$ respectively, seen as subsets of the vertices of $G$.
 Interpret $\mu_S$ and $\mu_T$ as column vectors, with coordinates labelled by the vertices of $G$.
 We will assume that $S$ and $T$ are non-empty to avoid trivialities.
 Let $A$ be the adjacency matrix of $G$.
 Then the fact that $(S,T)$ is a cross-coclique means that $\mu_S^\top A \mu_T = 0$.
 When we project everything onto the orthogonal complement of $\one$, this becomes
 \begin{equation}
  \label{Eq:Projection}
  \left( \mu_S - \frac{|S|}n \one \right)^\top \left( A - \frac kn J \right) \left( \mu_T - \frac{|T|}n \one \right) = - \frac k n |S| |T|.
 \end{equation}
 Note that the only difference in the spectra of $A$ and $A - \frac kn J$ is that we change the eigenvalue of $\one$ from $k$ to 0, hence $\lambda_1$ becomes 0, and the other eigenvalues stay the same.
 By applying Proposition \ref{Prop:General CS} to \eqref{Eq:Projection}, we obtain
 \begin{equation}
  \label{Eq:Cross proof}
  \frac kn |S| |T| \leq \lambda \left \| \mu_S - \frac{|S|}n \one \right \| \left \| \mu_T - \frac{|T|}n \one \right \| = \lambda \sqrt{ |S| \frac{n-|S|}n} \sqrt{|T| \frac{n-|T|}n}.
 \end{equation}
 Now we use the following inequality, which one can see as the AM-GM inequality.
 \begin{align*}
  \left(n-\sqrt{|S| |T|}\right)^2 - (n-|S|)(n-|T|) &= \left(n^2 - 2 \sqrt{|S| |T|} n + |S| |T|\right) - (n^2 - (|S|+|T|)n + |S| |T|) \\
  &= \left(|S| - 2 \sqrt{|S| |T|} + |T|\right)n = \left(\sqrt{|S|} - \sqrt{|T|}\right)^2 n \geq 0.
 \end{align*}
 We conclude that $(n-|S|)(n-|T|) \leq (n-\sqrt{|S| |T|})^2$ with equality if and only if $|S| = |T|$.
 Plugging this into \eqref{Eq:Cross proof}, we find that
 \[
  \frac kn |S| |T| \leq \frac \lambda n \sqrt{|S| |T|}(n-\sqrt{|S| |T|}).
 \]
 This inequality is equivalent to
 \[
  \sqrt{|S| |T|} \leq \frac n {\frac k \lambda + 1 },
 \]
 proving the inequality of the theorem.

 Now suppose that $\lambda_n < - \lambda_2$.
 If equality holds, then we must have $|S| = |T|$ and in (\ref{Eq:Cross proof}) we find equality for Proposition \ref{Prop:General CS}, hence $\mu_S - \frac{|S|}n \one$ and $\mu_T - \frac{|T|}n \one$ must be linearly dependent $\lambda_n$-eigenvectors of $A$.
 Since $|S| = |T|$, these vectors have the same magnitude, which means that they are either the same vector, or differ by a factor $-1$.
 Suppose that the latter holds, and write $\mu_S - \frac{|S|}n \one = x$.
 Then we have $\mu_S = \frac{|S|}n \one + x$ and $\mu_T = \frac{|S|}n \one - x$.
 Note that both of these vectors are $\{0,1\}$-vectors.
 If there exists a vertex $v$ with $x(v) = 0$, then $|S|/n$ is either $0$ or $1$.
The former contradicts the assumption that $S$ is non-empty. The latter implies that $S$ is the full vertex set.
 Since $T$ is non-empty, $G$ must have isolated vertices.
 But given that $G$ is $k$-regular, this means that $k=0$, hence $\lambda_2 = \lambda_n = 0$, contradicting that $\lambda_n < - \lambda_2$.
 This means that $x(v)$ is non-zero everywhere.
 Then we have $\{|S|/n \pm x(v)\} = \{0,1\}$, hence $|S| = n/2$ and $x(v) = \pm 1/2$.
 This immediately implies that $S$ and $T$ partition the vertex set.
 But since they form a cross-coclique, this implies that $G$ is disconnected.
 The multiplicity of $k$ as eigenvalue of $A$ equals the number of connected components.
 Hence we find that $\lambda_2 = k$.
 But then $\lambda_n < -\lambda_2 = -k$ cannot hold since a $k$-regular graph cannot have an eigenvalue whose absolute value exceeds $k$ by the Perron-Frobenius theorem.

 The other option is that $\mu_S - \frac{|S|}n \one = \mu_T - \frac{|T|}n \one$, which together with $|S| = |T|$ implies that $S = T$.
\end{proof}

\subsection{A bound for cross-determined directions}

We now apply the cross-coclique bound to the graph defined by the directions not
cross-determined by a pair of point sets.

\bigskip 

\noindent {\bf Theorem \ref{Thm:Cross determine}.}
{\it  Suppose that $S$ and $T$ are sets of points in an affine plane $A_q$ of order $q$ and the number of directions cross-determined by $(S,T)$ equals $c \leq \frac{q+1}2$.
 Then $|S| |T| \leq q^2$.
 If $|S| |T| = q^2$, then $S = T$ is a set of $q$ points determining $c$ directions.
}

\begin{rmk}
 By applying this theorem in Desarguesian planes, if equality holds, $S$ must be a translate of a set of $q$ points which is a subspace over a subfield of $\FF_q$ by \cite{BBBSS, Ball}.
\end{rmk}

\begin{proof}[Proof of \Cref{Thm:Cross determine}]
 Assume that $|S| |T| > 1$, otherwise the theorem trivially holds.
 Then $(S,T)$ cross-determines at least one direction, that is $c > 0$.
 Let $D$ be the set of directions which are not cross-determined by $(S,T)$.
 Its size equals $b = q+1-c$.
 Make a graph $G$ defined on the points of $A_q$ where two distinct points $P$ and $Q$ are adjacent vertices if and only if the line $PQ$ has a direction in $D$.
 Then $G$ is a strongly regular graph with parameters $(q^2, b(q-1), b^2 - 3b + q, b(b-1))$.
 This graph is of Latin square type (and in fact arises from a set of $b-2$ mutually orthogonal Latin squares).
 The eigenvalues of $G$ are $b(q-1)$ with multiplicity 1, $q-b$ and $-b$.
 We refer the reader to \cite[\S 8.4]{Brouwer:VanMaldeghem} for more details.
 Given that $c \leq \frac{q+1}2$, we have $b \geq \frac{q+1}2$ and hence $-b < -(q-b)$.

 By construction of $G$, $(S,T)$ is a cross-coclique in $G$.
 We apply \Cref{Thm:Cross coclique} to find that
 \[
  |S| |T| \leq \left( \frac{q^2}{\frac{b(q-1)}b + 1} \right)^2 = q^2.
 \]
 In case of equality, we must have $S = T$.
\end{proof}


The following examples show that the bound on the number of
cross-determined directions is sharp for the conclusion
that equality implies $S=T$.

\begin{ex}
 Suppose that $q$ is a prime power and $A_q = \ag(2,q)$ is a Desarguesian affine plane.
\begin{enumerate}
 \item
 First suppose that $q$ is odd.
 We will build an example based on the projective triangle.
 Let $S_q$ denote the set of non-zero squares of $\FF_q$ and $\ns_q$ the set of non-squares.
 Define
 \begin{align*}
  S &= \{(0,0)\} \cup \sett{(x,0)}{x \in S_q} \cup \sett{(0,y)}{y \in S_q} \\
  T &= \{(0,0)\} \cup \sett{(x,0)}{x \in \ns_q} \cup \sett{(0,y)}{y \in \ns_q}
 \end{align*}
 Then $S$ and $T$ have size $q$ and $|S \cap T| = 1$.
 We will check that $(S,T)$  cross-determines $\frac{q+3}2$ directions.
 Indeed, the axes $X=0$ and $Y=0$ clearly both intersect $S$ and $T$ (in distinct points), hence $(0)$ and $(\infty)$ are cross-determined.
 If we take a point $P \in S$ and a different point $Q \in T$ such that the line $\ell$ spanned by $P$ and $Q$ has a different slope than $(0)$ or $(\infty)$, then the points we chose are $(x,0)$ and $(0,y)$ with one element of $\{x,y\}$ in $S_q$ and the other one in $\ns_q$.
 Then $\ell$ has slope $(-y/x)$, and $y/x \in \ns_q$.
 Every element of $-\ns_q$ occurs as such a slope, so the directions cross-determined by $(S,T)$ are $(0)$, $(\infty)$, and all $(-d)$ with $d \in \ns_q$, which indeed gives us $\frac{q+3}2$ directions.
 \item Now suppose that $q$ is even. We will build an example based on the projective triad.
 Consider the absolute trace function $\Tr: \FF_q \to \FF_2$ on $\FF_q$.
 Define
 \begin{align*}
 S &= \{0,1\} \times \sett{y \in \FF_q}{\Tr(y) = 0} \\
 T &= \{0,1\} \times \sett{y \in \FF_q}{\Tr(y) = 1}. 
\end{align*}
Then $S$ and $T$ both have size $q$ and are disjoint.
Clearly the lines $X=1$ and $X=0$ intersect both $S$ and $T$, hence $S$ and $T$ cross-determine $(\infty)$.
Now suppose that $P \in S$ and $Q \in T$ are joined by a non-vertical line $\ell$.
Then they have coordinates of the form $(0,y)$ and $(1,z)$ with $\Tr(y) \neq \Tr(z)$.
Hence, $\ell$ has slope $z-y$ which has trace 1.
Conversely, it is easy to check that any direction $(d)$ with $\Tr(d) = 1$ is cross-determined.
Hence, $(S,T)$ cross-determines $\frac q2 + 1$ directions.
\end{enumerate}
\end{ex}

\section{Multisets inequidistributed from three directions in translation planes}
\label{Sec:3Directions}

In this section, we construct and classify integer multisets of points in translation planes with exactly 3 inequidistributed directions.
We first treat the Desarguesian case using a lifting
construction and then transfer the results to arbitrary translation planes
using the Andr\'e/Bruck--Bose representation.

\subsection{A lifting construction}

We first consider the Desarguesian affine plane $\AG(2,q)$ with the directions $\fq\cup\{\infty\}$.

\begin{df}\label{df:multiset-lift}
Let $S$ be an integer multiset of points in $\AG(2,q)$, let $\FF_{q^e}$ be a field
extension of $\fq$, and consider the trace map
$\Tr\colon\FF_{q^e}\to\fq $.
$\Lift(S)$ in $\AG(2,q^e)$ is the integer multiset defined by
\[
    \mu_{\Lift(S)}(X,Y)
    =\mu_S\bigl(\Tr(X),\Tr(Y)\bigr).
\]
\end{df}

We will also interpret $\Tr$ as a map on vectors in $\FF_{q^e}^2$ by applying the trace map componentwise.

\begin{lm}\label{lm:lift-equivariance}
Suppose that $M\in\GL(2,q)$. Then, for every integer multiset $S$ in
$\AG(2,q)$,
\[
    \Lift(S^M)=\Lift(S)^M.
\]
More generally, lifts of affinely equivalent integer multisets
are affinely equivalent. 
\end{lm}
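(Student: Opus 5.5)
The key observation is that the trace map $\Tr\colon\FF_{q^e}\to\fq$ is $\fq$-linear, so its componentwise extension $\Tr\colon\FF_{q^e}^2\to\fq^2$ commutes with every matrix $M\in\GL(2,q)$, because the entries of $M$ lie in $\fq$. We use the convention that $S^M$ is the integer multiset with $\mu_{S^M}(v)=\mu_S(vM^{-1})$, that is, $S^M$ is the image of $S$ under $v\mapsto vM$. If the paper intends column vectors or a different side of action, the argument is identical after transposing.

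For any $v\in\FF_{q^e}^2$ we compute
\[
\mu_{\Lift(S^M)}(v)=\mu_{S^M}(\Tr(v))=\mu_S\bigl(\Tr(v)M^{-1}\bigr)=\mu_S\bigl(\Tr(vM^{-1})\bigr)=\mu_{\Lift(S)}(vM^{-1})=\mu_{\Lift(S)^M}(v).
\]
The third equality is the commutation $\Tr(vM^{-1})=\Tr(v)M^{-1}$. To justify it, write out the coordinates: each coordinate of $vM^{-1}$ is an $\fq$-linear combination of the coordinates of $v$, and $\Tr$ is additive and $\fq$-homogeneous. Since $M$ is also invertible over $\FF_{q^e}$, $\Lift(S)^M$ is a well-defined multiset in $\AG(2,q^e)$.

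For the general statement, note that an affine transformation of $\AG(2,q)$ has the form $v\mapsto vM+t$ with $M\in\GL(2,q)$ and $t\in\fq^2$. It therefore remains to handle translations. The trace is surjective, so pick $s\in\FF_{q^e}^2$ with $\Tr(s)=t$; we can take $s=\alpha t$ for a fixed $\alpha\in\FF_{q^e}$ with $\Tr(\alpha)=1$. Then
\[
\mu_{\Lift(S+t)}(v)=\mu_S(\Tr(v)-t)=\mu_S(\Tr(v-s))=\mu_{\Lift(S)+s}(v),
\]
so $\Lift(S+t)=\Lift(S)+s$. Composing the two cases shows that if $S'=S^M+t$, then $\Lift(S')=\Lift(S)^M+s$. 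This is the image of $\Lift(S)$ under an affine map of $\AG(2,q^e)$, because $\GL(2,q)\le\GL(2,q^e)$.

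The only step that needs any care is the translation case. There $t$ does not itself lift to a translation by the same vector, since in general $\Tr(t)=et\neq t$. This is why we choose a preimage $s$ of $t$ under $\Tr$ rather than translating by $t$ directly; everything else is bookkeeping about the convention for the action.
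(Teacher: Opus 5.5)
Your proposal is correct and follows essentially the same argument as the paper: the trace commutes with $M^{-1}$ because $M$ has entries in $\fq$, and a translation by $t$ is handled by translating $\Lift(S)$ by a trace-preimage of $t$. Your explicit choice $s=\alpha t$ with $\Tr(\alpha)=1$, and the row-vector convention instead of the paper's column vectors, are immaterial differences.
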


\begin{proof}
For $v\in\FF_{q^e}^2$, the multiplicity of $v$ in $\Lift(S)^M$ is
\[
    \mu_{\Lift(S)}(M^{-1}v)
    =\mu_S\bigl(\Tr(M^{-1}v)\bigr).
\]
Since the entries of $M^{-1}$ belong to $\fq$, the trace map commutes with
$M^{-1}$. Hence this equals
\[
    \mu_S\bigl(M^{-1}\Tr(v)\bigr)
    =\mu_{S^M}\bigl(\Tr(v)\bigr),
\]
which is the multiplicity of $v$ in $\Lift(S^M)$.

For translations, let $b\in\fq^2$ and choose
$\widetilde b\in\FF_{q^e}^2$ with $\Tr(\widetilde b)=b$,
which is possible by the surjectivity of the trace map.
Then $\Tr(v-\widetilde b)=\Tr(v)-b$, so
\[
    \Lift(S+b)=\Lift(S)+\widetilde b.
\]
Together with the linear case, this proves the affine statement.
\end{proof}

\begin{prop}\label{prop:lift-directions}
Suppose that $S$ is an integer multiset in $\AG(2,q)$. Then $\Lift(S)$ in
$\AG(2,q^e)$ has the same inequidistributed directions as $S$. In particular, $\Lift(S)$ is equidistributed from every direction in
$\FF_{q^e}\setminus\fq$.
\end{prop}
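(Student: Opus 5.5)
We compute the line intersections of $\Lift(S)$ directly, using that $\Tr\colon\FF_{q^e}\to\fq$ is $\fq$-linear, surjective, and has fibres of size $q^{e-1}$. There are two kinds of directions in $\AG(2,q^e)$: those in $\fq\cup\set\infty$, and those in $\FF_{q^e}\setminus\fq$.

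\emph{Directions in $\fq\cup\set\infty$.} Take $d\in\fq$ and the line $\ell: Y=dX+c$ in $\AG(2,q^e)$. Because $d\in\fq$, linearity gives $\Tr(dX+c)=d\Tr(X)+\Tr(c)$. Hence
\[
 |\Lift(S)\cap\ell|=\sum_{X\in\FF_{q^e}}\mu_S\bigl(\Tr(X),d\Tr(X)+\Tr(c)\bigr)=q^{e-1}\sum_{x\in\fq}\mu_S\bigl(x,dx+\Tr(c)\bigr).
\]
This is $q^{e-1}|S\cap\ell'|$, where $\ell'$ is the line $Y=dX+\Tr(c)$ of $\AG(2,q)$. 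The vertical direction $(\infty)$ is handled the same way with lines $X=c$. As $c$ runs over $\FF_{q^e}$, $\Tr(c)$ runs over all of $\fq$. So the multiset of intersection numbers of $\Lift(S)$ with lines of direction $(d)$ is, up to the factor $q^{e-1}$, the same as that of $S$ with lines of direction $(d)$. Therefore $\Lift(S)$ is equidistributed from $(d)$ exactly when $S$ is.

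\emph{Directions $d\in\FF_{q^e}\setminus\fq$.} Here we must show that $|\Lift(S)\cap\ell|$ does not depend on $c$ for $\ell: Y=dX+c$. This is the key step. Consider the $\fq$-linear map
\[
 \phi\colon\FF_{q^e}\to\fq^2,\qquad X\mapsto\bigl(\Tr(X),\Tr(dX)\bigr).
\]
I claim $\phi$ is surjective. If it were not, some nonzero $(\alpha,\beta)\in\fq^2$ would satisfy $\alpha\Tr(X)+\beta\Tr(dX)=\Tr((\alpha+\beta d)X)=0$ for all $X$. Nondegeneracy of the trace form then forces $\alpha+\beta d=0$. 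This contradicts $d\notin\fq$, since $\beta\ne0$ would give $d=-\alpha/\beta\in\fq$, and $\beta=0$ would give $\alpha=0$.

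So every fibre of $\phi$ has size $q^{e-2}$. This requires $e\ge2$, which holds whenever $\FF_{q^e}\setminus\fq$ is nonempty. It follows that
\[
 |\Lift(S)\cap\ell|=\sum_{X}\mu_S\bigl(\phi(X)+(0,\Tr(c))\bigr)=q^{e-2}\sum_{v\in\fq^2}\mu_S(v)=q^{e-2}|S|,
\]
because translating $\fq^2$ by $(0,\Tr(c))$ merely permutes it. This value is independent of $c$, so $\Lift(S)$ is equidistributed from $(d)$.

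The two cases together prove the proposition. The only non-routine ingredient is the surjectivity of $\phi$, which comes from the nondegeneracy of the trace bilinear form.
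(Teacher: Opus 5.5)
Your proof is correct and follows essentially the paper's argument. For directions in $\fq\cup\{\infty\}$ both use that the fibres of $\Tr$ have size $q^{e-1}$. For $d\notin\fq$ both use that $\Tr(X)$ and $\Tr(dX)$ are linearly independent functionals, so the fibres of $X\mapsto(\Tr(X),\Tr(dX))$ have size $q^{e-2}$.

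The differences are minor. You treat every $d\in\fq$ by direct computation instead of reducing to $(\infty)$ via Lemma~\ref{lm:lift-equivariance}. You also justify the linear independence through nondegeneracy of the trace form, and you state explicitly that $\Tr$ is surjective for the converse direction; the paper leaves both points implicit.
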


\begin{proof}
First let $(d)$ be a direction in $\fq\cup\{\infty\}$. If $\ell$ has
direction $(\infty)$ and equation $X=a$, and $\ell'$ is the line
$X=\Tr(a)$ in $\AG(2,q)$, then
\[
    |\Lift(S)\cap\ell|=q^{e-1}|S\cap\ell'|.
\]
The assertion follows for $(\infty)$ and hence, by Lemma 
\ref{lm:lift-equivariance}, for every $(d)$ with $d\in\fq$.

Now let  $\ell$ be a line  with slope $d \in \FF_{q^e}\setminus\fq$, and hence equation
$Y=dX+b$ for some $b\in\FF_{q^e}$. Then $\ell$ is an
$e$-dimensional affine $\fq$-subspace.
Define for $(\alpha,\beta) \in \FF_q^2$, $B_{\alpha,\beta} = \Lift(\{(\alpha,\beta)\})$.
The equations
$\Tr(X)=\alpha$ and $\Tr(Y)=\beta$ define affine hyperplanes when we
view $\FF_{q^e}^2$ as an $\fq$-vector space. Neither of these
hyperplanes is parallel to $\ell$, since $\ell$ has points with every
$X$-coordinate and every $Y$-coordinate. Therefore, either
$\ell\cap B_{\alpha,\beta}$ is an $(e-2)$-dimensional affine
$\fq$-subspace, or the restrictions of the equations
\[
    \Tr(X)=\alpha
    \qquad\text{and}\qquad
    \Tr(Y)=\Tr(dX+b)=\beta
\]
to $\ell$ define parallel, possibly equal, hyperplanes. The latter
happens if and only if $\Tr(X)$ and $\Tr(dX)$ are $\fq$-linearly dependent
functionals. 
This happens
if and only if $d\in\fq$. We conclude that
\[
    |\ell\cap B_{\alpha,\beta}|=q^{e-2}.
\]

By definition, $\mu_{\Lift(S)}$ is constant on each block
$B_{\alpha,\beta}$, with value $\mu_S(\alpha,\beta)$. Hence
\[
\begin{aligned}
    |\ell\cap\Lift(S)|
    &=
    \sum_{\alpha,\beta\in\fq}
    \mu_S(\alpha,\beta)|\ell\cap B_{\alpha,\beta}| \\
    &=
    q^{e-2}\sum_{\alpha,\beta\in\fq}\mu_S(\alpha,\beta)
    =q^{e-2}|S|.
\end{aligned}
\]
Thus $\Lift(S)$ is equidistributed from $(d)$.
\end{proof}

Let $D$ be a set of directions.
For a line $\ell$ of $\AG(2,q)$, let $\dir(\ell)$ denote its slope.
Identifying each affine line $\ell$ with its
multiplicity function $\mu_\ell$, denote by
\[
    \ZZ\ml(D)
    =\sett{
        \sum_{\substack{\ell\in\ml\\ \dir(\ell)\in D}}
        c_\ell\mu_\ell}
        {c_\ell\in\ZZ
      }
\]
the integral span of the affine lines whose directions belong to $D$.

\begin{rmk}\label{rmk:shifting}
Suppose that $D$ is non-empty and let $(d)\in D$. Since the lines of
direction $(d)$ partition the affine plane,
\[
    \one=\sum_{\substack{\ell\in\ml\\\dir(\ell)=(d)}}\mu_\ell
    \in\ZZ\ml(D).
\]
Consequently, for every integer multiset $S$ and every $c\in\ZZ$,
\[
\mu_S+c\mathbf{1}+\mathbb Z\mathcal L(D)
=\mu_S+\mathbb Z\mathcal L(D).
\]
In particular, shifting an integer multiset to a non-negative
multiset does not change its class modulo $\ZZ\ml(D)$.
\end{rmk}

Write $q=p^h$, where $p$ is prime, and let
$\Tr\colon\fq\to\FF_p$ denote the absolute trace map.
Let
\[
    \nu\colon\FF_p\to\{0,\ldots,p-1\}
\]
denote the natural choice of integer representatives, and let
\[
    K=\sett{(x,y)\in\FF_p^2 }{ \nu(x) < \nu(y)}
\]
be the Kiss--Somlai set from Theorem \ref{Thm:KS}.
The next lemma is a useful identity for the multiplicity function of $\Lift(K)$.

\begin{lm}\label{lm:KF-identity}
For $x,y\in\fq$,
\[
    \mu_{\Lift(K)}(x,y)
    =\frac{\nu(\Tr(y))-\nu(\Tr(x))+\nu(\Tr(x-y))}{p}.
\]
\end{lm}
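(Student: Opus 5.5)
The identity is a statement about integers in $\{0,\dots,p-1\}$, so the plan is to reduce it to a one-line fact about representatives of elements of $\FF_p$. Note first that $\Lift(K)$ is built here with the absolute trace $\Tr\colon\fq\to\FF_p$; this is the case $q\mapsto p$, $q^e\mapsto q$ of Definition~\ref{df:multiset-lift}. Hence
\[
\mu_{\Lift(K)}(x,y)=\mu_K(\Tr(x),\Tr(y)),
\]
which is $1$ if $\nu(\Tr(x))<\nu(\Tr(y))$ and $0$ otherwise. Set $a=\Tr(x)$ and $b=\Tr(y)$ in $\FF_p$. Since the trace is additive, $\Tr(x-y)=a-b$. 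The lemma is therefore equivalent to the following identity for all $a,b\in\FF_p$:
\[
p\,[\nu(a)<\nu(b)]=\nu(b)-\nu(a)+\nu(a-b).
\]
Here $[\cdot]$ denotes the indicator, written out in words in the proof to avoid undefined notation.

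To prove this, use that $\nu(a-b)\equiv\nu(a)-\nu(b)\pmod p$ and that $\nu(a-b)\in\{0,\dots,p-1\}$. The integer $\nu(a)-\nu(b)$ lies in $\{-(p-1),\dots,p-1\}$, which gives two cases.
\begin{enumerate}
 \item If $\nu(a)\geq\nu(b)$, then $\nu(a)-\nu(b)$ already lies in $\{0,\dots,p-1\}$. So $\nu(a-b)=\nu(a)-\nu(b)$, and the right-hand side equals $0$.
 \item If $\nu(a)<\nu(b)$, then $\nu(a)-\nu(b)+p$ lies in $\{1,\dots,p-1\}$. So $\nu(a-b)=\nu(a)-\nu(b)+p$, and the right-hand side equals $p$.
\end{enumerate}
In both cases the right-hand side is $p$ times the indicator of $\nu(a)<\nu(b)$. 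Dividing by $p$ gives the lemma.

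No real obstacle is expected. The only point needing care is checking that the range of $\nu(a)-\nu(b)$ leaves exactly these two wrap-around cases.
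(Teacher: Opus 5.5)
Your proposal is correct and follows essentially the same argument as the paper. Both use $\nu(\Tr(x-y))\equiv\nu(\Tr(x))-\nu(\Tr(y)) \pmod p$ and split into the cases $\nu(\Tr(x))\geq\nu(\Tr(y))$ and $\nu(\Tr(x))<\nu(\Tr(y))$, which make the numerator equal to $0$ and $p$ respectively.
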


\begin{proof}
Put $u=\nu(\Tr(x))$, $v=\nu(\Tr(y))$, and $w=\nu(\Tr(x-y))$.
Since $w\equiv u-v\pmod p$, we have $w=u-v$ if $u \geq v$, and $w=u-v+p$ if $u < v$. Thus the numerator on the right-hand side is $0$ if $(x,y) \notin \Lift(K)$ and $p$ if $(x,y) \in \Lift(K)$.
\end{proof}

We will now prove that every multiset with exactly 3 inequidistributed directions can be transformed, by adding and subtracting lines with inequidistributed directions, to either the empty set or a copy of a lifted Kiss-Somlai set.

\begin{thm}\label{thm:integer-three-directions}
Let $p$ be prime and $q=p^h$. Suppose that $S$ is an integer multiset
in $\AG(2,q)$ that is inequidistributed from exactly three directions.
Then there exists an affine transformation $\phi$ of $\AG(2,q)$ such that either $\mu_{S^\varphi} \in \ZZ \ml(\set{(\infty),(0),(1)})$ or
\[
 \mu_{S^\varphi} \in \mu_{\Lift(K)} + \ZZ \ml(\set{(\infty),(0),(1)}).
\]
\end{thm}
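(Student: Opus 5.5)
By Lemma \ref{Lm:Ortho} the problem becomes a functional equation over $\FF_q$, which we then solve modulo $\ZZ$. First I normalise the directions. The affine group of $\AG(2,q)$ acts $3$-transitively on $\ell_\infty$, so there is an affine map $\phi$ sending the three inequidistributed directions of $S$ to $(\infty),(0),(1)$. Affine maps permute parallel classes, so $S^\phi$ is inequidistributed from exactly these three directions, and I replace $S$ by $S^\phi$.

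Next I translate Lemma \ref{Lm:Ortho}(\ref{Lm:Ortho:2}) into a formula. It gives $\mu_S\in\vspann{\one,V_{(\infty)},V_{(0)},V_{(1)}}{}$. Each $V_{(d)}$ consists of rational functions that are constant on the lines of direction $(d)$. The lines of directions $(\infty)$, $(0)$, $(1)$ are the level sets of $x$, of $y$ and of $x-y$ respectively. Hence there are functions $f,g,h\colon\fq\to\QQ$ with
\[
 \mu_S(x,y)=f(x)+g(y)+h(x-y)\qquad\text{for all } x,y\in\fq,
\]
where any constant has been absorbed into $f$.

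The key step is a second-difference argument showing that $f,g,h$ are ``affine modulo $\ZZ$''. Take $x,x',y,y'$ and form the alternating sum $\mu_S(x,y)-\mu_S(x',y)-\mu_S(x,y')+\mu_S(x',y')$. This is an integer, and the $f$ and $g$ terms cancel. Setting $u=x'-y'$, $s=x-x'$, $t=y'-y$, the sum becomes
\[
 h(u+s+t)-h(u+s)-h(u+t)+h(u)\in\ZZ\qquad\text{for all } u,s,t.
\]
Therefore $\chi(u):=h(u)-h(0)\bmod\ZZ$ is an additive homomorphism $(\fq,+)\to\QQ/\ZZ$. Since $\fq$ is elementary abelian of exponent $p$, $\chi$ takes values in $\frac1p\ZZ/\ZZ$. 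So $\chi(u)\equiv\nu(\lambda(u))/p$ for some $\FF_p$-linear functional $\lambda\colon\fq\to\FF_p$. Every such functional has the form $\lambda(u)=\Tr(\alpha u)$ with $\alpha\in\fq$.

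The analogous differences in $(x,x-y)$ and $(y,x-y)$ give $f(x)\equiv f(0)+\nu(\Tr(\beta x))/p$ and $g(y)\equiv g(0)+\nu(\Tr(\gamma y))/p$ modulo $\ZZ$. Substituting into the integrality of $\mu_S$, the map $(x,y)\mapsto\Tr(\beta x)+\Tr(\gamma y)+\Tr(\alpha(x-y))$ must be constant on $\fq^2$. Hence it vanishes, so $\beta=-\alpha$ and $\gamma=\alpha$. The constants must then satisfy $f(0)+g(0)+h(0)\in\ZZ$. This step, getting exactly additive characters out of the integrality condition while keeping the three directions coupled, is where I expect the main care to be needed.

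It remains to finish by cases. If $\alpha=0$, then $f,g,h$ are integer-valued up to additive constants whose sum is an integer. Shifting those constants between $f,g,h$ makes all three integer-valued, so $\mu_S\in\ZZ\ml(\set{(\infty),(0),(1)})$.

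If $\alpha\neq0$, apply the further linear map $(x,y)\mapsto(\alpha^{-1}x,\alpha^{-1}y)$, which fixes the directions $(\infty),(0),(1)$. This lets us assume $\alpha=1$. Using $\nu(-t)\equiv-\nu(t)\pmod p$, we get modulo $\ZZ$
\[
 f(x)\equiv c_1-\tfrac{\nu(\Tr x)}p,\qquad g(y)\equiv c_2+\tfrac{\nu(\Tr y)}p,\qquad h(u)\equiv c_3+\tfrac{\nu(\Tr u)}p,
\]
with $c_1+c_2+c_3\in\ZZ$. By Lemma \ref{lm:KF-identity}, $\mu_S-\mu_{\Lift(K)}$ is then a sum $\tilde f(x)+\tilde g(y)+\tilde h(x-y)$ with $\tilde f,\tilde g,\tilde h$ integer-valued up to constants summing to an integer. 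As in the first case, this lies in $\ZZ\ml(\set{(\infty),(0),(1)})$, using Remark \ref{rmk:shifting} to absorb integer constants. Composing the two affine maps gives the required $\phi$.
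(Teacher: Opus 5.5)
Your proof is correct and follows essentially the paper's route. You write $\mu_S(x,y)$ as a sum of functions of $x$, $y$ and $x-y$, show that modulo integers these are additive characters of order $p$ of the form $\nu(\Tr(at))/p$ with coupled coefficients, and finish by rescaling and applying Lemma~\ref{lm:KF-identity}. The differences are cosmetic: you get the decomposition abstractly from Lemma~\ref{Lm:Ortho} with rational components and solve the functional equation modulo $\ZZ$ by second differences, whereas the paper derives it by counting the lines through a point, so that the components are, up to constants, the line intersection numbers $f_d^*(t)/q$ (a fact it reuses in the proof of Theorem~\ref{Thm:Main:Lift}), and solves it modulo $q$ by setting $x=0$ and $y=0$.
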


\begin{proof}
We may apply a first affine transformation so that $S$ is
inequidistributed from $(\infty)$, $(0)$, and $(1)$.
For $t\in\fq$, let $\ell_\infty(t)$, $\ell_0(t)$, and $\ell_1(t)$ denote
the lines $X=t$, $Y=t$, and $X-Y=t$, respectively, and let $f_d^*(t)$ denote the intersection number of $S$ with $\ell_d(t)$, i.e. 
\[
    f_d^*(t)=\sum_{P\in\ell_d(t)}\mu_S(P),
    \qquad d\in\{\infty,0,1\}.
\]

All other directions are equidistributed and hence have intersection number $|S|/q$. Counting, with
multiplicity, the points of $S$ on the lines through $(x,y)$ gives
\[
    |S| + q\mu_S(x,y)
    =f_\infty^*(x)+f_0^*(y)+f_1^*(x-y) + (q-2) \frac{|S|}{q}.
\]
Let $\lambda: =\frac{2|S|}{q}$.
Then
\begin{equation}\label{eq:qmu-relation}
    q\mu_S(x,y)
    =f_\infty^*(x)+f_0^*(y)+f_1^*(x-y)  
    -\lambda.
\end{equation}
Here $\lambda$ is an integer: if $q>2$, this follows from the existence of an equidistributed direction, while for $q=2$ we have $\lambda=|S|$.

For $a\in\ZZ$, write $[a]_q$ for its residue class in $\ZZ/q\ZZ$.
Reducing \eqref{eq:qmu-relation} modulo $q$ and subtracting its special
case $x=y=0$ gives
\[
    [f_\infty^*(x)-f_\infty^*(0)]_q
    +[f_0^*(y)-f_0^*(0)]_q
    +[f_1^*(x-y)-f_1^*(0)]_q=[0]_q.
\]
Define $g_d(t)=[f_d^*(t)-f_d^*(0)]_q\in\ZZ/q\ZZ$. Then
\begin{equation}\label{eq:g-relation}
    g_\infty(x)+g_0(y)+g_1(x-y)=[0]_q.
\end{equation}
Setting $y=0$ in \eqref{eq:g-relation} gives
$g_\infty(x)=-g_1(x)$, while setting $x=0$ gives
$g_0(y)=-g_1(-y)$. Substituting these identities back into
\eqref{eq:g-relation}, we obtain
\[
    g_1(x-y)=g_1(x)+g_1(-y)
\] 
for all $x,y\in\fq$, which means that $g_1$ is a group homomorphism from $(\FF_q,+)$ to $(\zq,+)$. The additive group of $\fq$ has exponent $p$, so $pg_1(t)=[0]_q$ for
every $t\in\fq$. Therefore, every value of $g_1$ is of the form
$[\frac qp i]_q$ for a unique $i\in\{0,\ldots,p-1\}$. This defines an
additive, and hence $\FF_p$-linear, map $F\colon\fq\to\FF_p$ satisfying
\[
    g_1(t)=\left[\frac qp\,\nu(F(t))\right]_q.
\]
Since $g_1$ is additive, we also have $g_\infty=-g_1$ and $g_0=g_1$.
Moreover, since $F$ is $\FF_p$-linear, it must be of the form $F(t) = \Tr(at)$ for some $a \in \fq$.
It follows that there are integer-valued functions
$A_\infty, A_0, A_1 \colon\fq\to\ZZ$, vanishing at zero, such that
\begin{align}
  \label{Eq:f's and A's}
 \begin{aligned}
    f_\infty^*(x)&=f_\infty^*(0)-\frac qp\nu(\Tr(ax))+q A_\infty(x),\\
    f_0^*(y)&=f_0^*(0)+\frac qp\nu(\Tr(ay))+q A_0(y),\\
    f_1^*(z)&=f_1^*(0)+\frac qp\nu(\Tr(az))+q A_1(z).
\end{aligned}
\end{align}
Substituting these expressions into \eqref{eq:qmu-relation} gives
\begin{equation}
\label{eq:muT}
    \mu_S(x,y)
    =\underbrace{\mu_S(0,0)+A_\infty(x)+A_0(y)+A_1(x-y)}_{\eqqcolon L(x,y)}
     +\frac{\nu(\Tr(ay))-\nu(\Tr(ax))+\nu(\Tr(a(x-y)))}{p}.
\end{equation}
Note that $L \in \ZZ \ml(\{(\infty), (0), (1) \})$.
If $a = 0$, then $\mu_S = L$.
Otherwise, let $\phi: \fq^2 \to \fq^2: (x,y) \mapsto a(x,y)$.
Then
\[
\mu_{S^\phi}=L\circ\phi^{-1}+\mu_{\Lift(K)}
\]
by Lemma~\ref{lm:KF-identity}.
Since $L\circ\phi^{-1}\in\ZZ\ml(\{(\infty),(0),(1)\})$,
the result follows.
\end{proof}

\begin{rmk}
Let $D=\set{(\infty),(0),(1)}$. The lifted Kiss--Somlai set
satisfies
\[
    \mu_{\Lift(K)}\notin\ZZ\ml(D),
    \qquad
    p\mu_{\Lift(K)}\in\ZZ\ml(D).
\]
The second assertion follows from Lemma~\ref{lm:KF-identity}.
For the first, note that every element of $\ZZ\ml(D)$ has
congruent intersection numbers modulo $q$ with all vertical
lines. However, choose $a,b\in\fq$ such that $\nu(\Tr(a))=p-1$ and $\nu(\Tr(b))=p-2$. The vertical lines $X=a$ and $X=b$ meet $\Lift(K)$
in $0$ and $q/p$ points, respectively.

Negative line coefficients may be necessary even when the
multiset has non-negative multiplicities. For example, taking
$q=p$, we have $p\mu_K\in\ZZ\ml(D)$, but this multiset cannot
be expressed as a non-negative linear combination of lines.
Indeed, $K$ is disjoint from both $X=-1$ and $Y=0$, and hence
contains no affine line. Thus any line with a positive
coefficient in such a combination would give positive
multiplicity at a point outside $K$.
\end{rmk}


For ordinary sets, we get the following stronger result.

\bigskip 

\noindent {\bf Theorem \ref{Thm:Main:Lift}.}
{\it
 Let $p$ be a prime number and $q = p^h$ a power of $p$.
 Let $\Tr:\FF_q\to\FF_p$ be the absolute trace function.
 Recall the natural total ordering $<$ on the elements of $\FF_p$.
 Then the set 
 \[
  L = \sett{(x,y) \in \FF_q^2}{\Tr(x) < \Tr(y)}
 \]
 of points in $\ag(2,q)$ is inequidistributed from exactly 3 directions: $(0)$, $(1)$, and $(\infty)$.
 Moreover, every set that is inequidistributed from exactly 3 directions in $\ag(2,q)$ is affinely equivalent to the union of $L$ and some parallel lines that are disjoint from $L$.
}

\begin{proof}
Since $L = \Lift(K)$, the first part of the theorem follows from Proposition \ref{prop:lift-directions} and Theorem \ref{Thm:KS}.
We still need to prove that if $T$ is a set of points in $\AG(2,q)$ which is inequidistributed from exactly 3 directions, then $T = T_1 \cup T_2$ with $T_1$ affinely equivalent to $L$ and $T_2$ a union of parallel lines disjoint from $T_1$.

The complement of $K$ is the union of the line $X=Y$ and the set $\sett{(x,y) \in \FF_p^2}{\nu(x) > \nu(y)}$ which is affinely equivalent to $K$.
Thus, if $q=p$, the assertion follows directly from Theorem \ref{Thm:KS}.
Hence suppose that $h>1$.

If $T$ contains no affine line, put $T_1=T$ and $T_2=\emptyset$.
Otherwise, let $\ell\subseteq T$ be a line with direction $(d)$. Since
$T$ has an equidistributed direction, $|T|=kq$ for some integer $k$. If
$(d)$ were equidistributed, then every line with direction $(d)$ would
contain $k$ points of $T$. As $\ell\subseteq T$, this would imply $k=q$
and hence $T=\fq^2$, a contradiction. Thus $(d)$ is inequidistributed.
Let $T_2$ be the union of all lines parallel to $\ell$ that are contained
in $T$, and put $T_1=T\setminus T_2$. Removing $T_2$ changes all
line-intersection numbers in every other direction by the same amount.
The direction $(d)$ also remains inequidistributed: otherwise, since
$T_1$ is disjoint from at least one line with direction $(d)$, it would
be empty, and $T$ would have only one inequidistributed direction.
Therefore, $T_1$ has the same three inequidistributed directions as $T$.

In either case, $T_1$ contains no affine line. Indeed, all complete lines
parallel to $\ell$ were removed, while every line not parallel to $\ell$
meets one of the removed lines.
Applying an affine transformation, we may assume that the three
inequidistributed directions are $(\infty)$, $(0)$, and $(1)$.
Apply the proof of Theorem \ref{thm:integer-three-directions} to $T_1$.
Since $T_1$ contains no affine line,
$f_d^*(t)\in\{0,\dots,q-1\}$ for all $d\in\{\infty,0,1\}$
and all $t\in\fq$.
By (\ref{Eq:f's and A's}), $a=0$ would force each $f_d^*$
to be constant, a contradiction; hence, after applying
$(x,y)\mapsto(ax,ay)$, we may assume that $a=1$.  
Together with (\ref{Eq:f's and A's}), this shows that $A_d(t)$ is uniquely defined by $f^*_d(0)$ and $\Tr(t)$.
Therefore, $\mu_{T_1}(x,y)$ only depends on $\Tr(x)$ and $\Tr(y)$, and hence $T_1 = \Lift(K')$ for some set $K' \subseteq \FF_p^2$.
By Proposition \ref{prop:lift-directions}, $K'$ has exactly three inequidistributed
directions.
Thus, by Theorem \ref{Thm:KS}, $K'$ is the image of $K$ or its complement under an affine transformation.
Since $T_1$ does not contain any line, $K'$ must be affinely equivalent to $K$. 
Affine transformations of $\AG(2,p)$ lift to affine transformations of
$\AG(2,q)$ (see Lemma \ref{lm:lift-equivariance}), thus $T_1$ is affinely equivalent to $\Lift(K) = L$.
Adding 
$T_2$ back proves the assertion.
\end{proof}

\subsection{Translation planes}

A translation in an affine plane $A_q=(\mp,\ml)$ is either the identity,
or a collineation that maps each line to a parallel line and has no fixed
points. Equivalently, it is an elation in the projective extension with
$\ell_\infty$ as axis. The translations of $A_q$ form a group
$T(A_q)$, and for any $P,Q\in\mp$, there can be at most one translation
mapping $P$ to $Q$. If $T(A_q)$ acts transitively, and hence regularly,
on $\mp$, then $A_q$ is called a translation plane. Bruck and Bose
\cite{Bruck:Bose} gave the following characterisation of translation planes.

\begin{const}[{\cite[Theorems~4.1 and~7.1]{Bruck:Bose}}]
\label{const:ABB}
Consider the projective space $\PG(2k,q)$, and take a hyperplane
$\Pi_\infty$. Let $\ms$ be a spread of $(k-1)$-spaces in $\Pi_\infty$,
i.e.\ a set of $q^k+1$ pairwise disjoint $(k-1)$-spaces that partition the
points of $\Pi_\infty$. Let $\mp$ be the points of $\PG(2k,q)$ not in
$\Pi_\infty$, and consider the collection of $k$-spaces of $\PG(2k,q)$
that intersect $\Pi_\infty$ in an element of $\ms$. Let $\ml$ be the
family of point sets obtained from this collection after removing their
points on $\Pi_\infty$. Then $(\mp,\ml)$ is a finite affine translation
plane of order $q^k$. Conversely, every finite affine translation plane can
be constructed in this way.
\end{const}

We call Construction \ref{const:ABB} the Andr\'e/Bruck--Bose construction, or ABB construction for short.
We first recall an elementary lemma.
It follows e.g.\ from \cite[Theorem 1.6]{Thas}, but we include a short proof.

\begin{lm}\label{lm:three-skew-subspaces}
The group $\GL(2k,q)$ acts transitively on triples of pairwise skew
$k$-spaces in $\fq^{2k}$.
\end{lm}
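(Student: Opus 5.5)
Let $(U_1,U_2,U_3)$ be a triple of pairwise skew $k$-dimensional subspaces of $V=\fq^{2k}$. The plan is to show that it can be mapped to the standard triple
\[
 E_1=\fq^k\times\{0\},\qquad E_2=\{0\}\times\fq^k,\qquad E_3=\sett{(v,v)}{v\in\fq^k}.
\]
This standard triple is itself pairwise skew, since $(v,v)\in E_1$ forces $v=0$ (and similarly for $E_2$). Transitivity then follows by composing one such map with the inverse of another.

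\textbf{Step 1.} Since $\dim U_1+\dim U_2=2k$ and $U_1\cap U_2=\{0\}$, we have $V=U_1\oplus U_2$. Choose bases $u_1,\dots,u_k$ of $U_1$ and $w_1,\dots,w_k$ of $U_2$. The linear map sending the standard basis of $V$ to $u_1,\dots,u_k,w_1,\dots,w_k$ lies in $\GL(2k,q)$. Applying its inverse, we may assume $U_1=E_1$ and $U_2=E_2$.

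\textbf{Step 2.} Next we describe $U_3$. Consider the projection $\pi_1\colon V\to\fq^k$ onto the first coordinate block. Its kernel is $E_2$, and $U_3\cap E_2=\{0\}$, so $\pi_1$ restricted to $U_3$ is injective and hence, by dimension, bijective. The same holds for $\pi_2$, whose kernel is $E_1$. Therefore $U_3=\sett{(v,Av)}{v\in\fq^k}$, where $A=\pi_2\circ(\pi_1|_{U_3})^{-1}$ is invertible, since it is a composition of bijections.

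\textbf{Step 3.} The block-diagonal map $\mathrm{diag}(I,A^{-1})\in\GL(2k,q)$ fixes $E_1$ and $E_2$ setwise and sends $(v,Av)$ to $(v,v)$. So it maps $U_3$ onto $E_3$, which finishes the argument.

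The only step needing care is Step 2, namely the invertibility of $A$. This comes directly from the skewness of $U_3$ with both $E_1$ and $E_2$: a nonzero $v$ with $Av=0$ would give $(v,0)\in U_3\cap E_1$. The rest is routine linear algebra.
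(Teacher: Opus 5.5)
Your proof is correct and follows essentially the same route as the paper: both arguments bring the triple to the normal form $\langle e_1,\dots,e_k\rangle$, $\langle e_{k+1},\dots,e_{2k}\rangle$, $\langle e_1+e_{k+1},\dots,e_k+e_{2k}\rangle$. The paper finds the partner $v_{k+i}\in K_2$ of each $v_i\in K_1$ by intersecting the $(k+1)$-spaces $\langle v_i,K_2\rangle$ and $\langle v_i,K_3\rangle$, then checks linear independence by hand; your description of $U_3$ as the graph of an invertible map $A$ gives the same basis ($v_{k+i}=Av_i$) and makes those checks automatic.
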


\begin{proof}
Suppose that $K_1,K_2,K_3$ are $k$-spaces of $\fq^{2k}$, any two of which
intersect trivially. Choose a basis $v_1,\ldots,v_k$ of $K_1$. For every
$i$, the $(k+1)$-spaces $\vspan{v_i,K_2}$ and $\vspan{v_i,K_3}$ intersect
in a $2$-space through $v_i$. Choose non-zero vectors $v_{k+i}\in K_2$ and $u_i\in K_3$
in this $2$-space. After rescaling $v_i$ and $v_{k+i}$, we may assume that
\[
    u_i=v_i+v_{k+i}.
\]
The vectors $u_1,\ldots,u_k$ are linearly independent: if
$\sum_i\gamma_i u_i=0$, then
\[
    \sum_i\gamma_i v_i=-\sum_i\gamma_i v_{k+i}.
\]
The left-hand side belongs to $K_1$ and the right-hand side to $K_2$; since
$K_1\cap K_2=\{0\}$, all $\gamma_i$ are zero. Similarly,
$v_{k+1},\ldots,v_{2k}$ are linearly independent. Consequently, there is a
basis $v_1,\ldots,v_{2k}$ such that
\[
\begin{aligned}
    K_1&=\vspan{v_1,\ldots,v_k},\\
    K_2&=\vspan{v_{k+1},\ldots,v_{2k}},\\
    K_3&=\vspan{v_1+v_{k+1},\ldots,v_k+v_{2k}}.
\end{aligned}
\]
Since $\GL(2k,q)$ acts transitively on bases, it acts transitively on such
triples.
\end{proof}

\begin{crl}\label{crl:three-directions-representation}
Let $A_q$ be a translation plane of order $q$, and let
$(d_1),(d_2),(d_3)$ be three distinct directions in $A_q$. Then $A_q$
can be represented as $(\fq^2,\ml)$ so that its lines with directions
$(d_1),(d_2),(d_3)$ are the lines of $\AG(2,q)$ with directions
$(\infty),(0),(1)$, respectively.
\end{crl}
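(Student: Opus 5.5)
We build the representation directly from the ABB construction (Construction \ref{const:ABB}) together with Lemma \ref{lm:three-skew-subspaces}. Write $q = r^k$, where $A_q$ arises from a spread $\ms$ of $(k-1)$-spaces in $\Pi_\infty \subseteq \PG(2k,r)$. Passing to vector space language, identify the affine points $\mp$ with $\FF_r^{2k}$ by choosing an origin. The spread elements then become a family of $r^k+1$ $k$-dimensional subspaces of $\FF_r^{2k}$ that pairwise intersect trivially. The lines of $A_q$ are exactly the cosets $v + K$ with $K$ a spread element, and the parallel class of direction $(d)$ is the set of cosets of the spread element $K_d$ corresponding to $(d)$.

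Let $K_1, K_2, K_3$ be the spread elements corresponding to $(d_1),(d_2),(d_3)$; they are pairwise skew $k$-spaces. Next, look at $\AG(2,q)$ with $\FF_q \cong \FF_r^k$, so that $\FF_q^2 \cong \FF_r^{2k}$. Here the lines with directions $(\infty)$, $(0)$, $(1)$ are the cosets of the $\FF_q$-subspaces $X=0$, $Y=0$, and $X=Y$, i.e.\ $\{0\}\times \FF_q$, $\FF_q \times \{0\}$, and $\{(t,t)\}$. Viewed over $\FF_r$, these are three pairwise skew $k$-spaces. By Lemma \ref{lm:three-skew-subspaces}, there is $g \in \GL(2k,r)$ mapping $K_1,K_2,K_3$ to these three subspaces, respectively.

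Since $g$ is linear, it maps cosets to cosets. Transport the structure of $A_q$ along $g$: take the point set $\FF_q^2$ and line set $\ml = \{ g(v+K) : K \in \ms\}$. This is an isomorphic copy of $A_q$. In it, the parallel class of $(d_i)$ consists of the cosets of $g(K_i)$, which are precisely the lines of $\AG(2,q)$ with directions $(\infty),(0),(1)$ for $i=1,2,3$.

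The only point needing care is the translation from the projective ABB picture to the linear one. The $k$-spaces of $\PG(2k,r)$ through a spread element $\Sigma \subseteq \Pi_\infty$, with their points at infinity removed, are exactly the affine translates of the $k$-dimensional vector subspace associated with $\Sigma$. This is standard, because $\PG(2k,r)\setminus \Pi_\infty \cong \AG(2k,r)$ with $\Pi_\infty$ as its hyperplane at infinity. The other nontrivial ingredient is the transitivity supplied by Lemma \ref{lm:three-skew-subspaces}.
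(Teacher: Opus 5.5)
Your proof is correct and follows the paper's argument. You represent $A_q$ via the ABB construction, use Lemma \ref{lm:three-skew-subspaces} to move the three spread elements for $(d_1),(d_2),(d_3)$ onto the $k$-spaces underlying the $\AG(2,q)$ directions $(\infty),(0),(1)$, and then transport the line set along that map. The only difference is cosmetic: you work over the field $\FF_r$ given by the ABB representation, using $\FF_q\cong\FF_r^k$ and linear/coset language, whereas the paper passes to the prime field for both planes and phrases the map as a collineation of $\PG(2h,p)$. Your version also sidesteps the tacit field-reduction step the paper relies on.
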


\begin{proof}
Both $A_q$ and $\AG(2,q)$ are translation planes, so $q=p^h$ for some
prime $p$. They can both be obtained from the ABB construction in
$\PG(2h,p)$. Let $\ms_1$ and $\ms_2$ be the spreads corresponding to
$A_q$ and $\AG(2,q)$, respectively. By Lemma
\ref{lm:three-skew-subspaces}, there is a collineation of $\PG(2h,p)$ that
maps the three pairwise disjoint $(h-1)$-spaces of $\ms_1$ corresponding to
$(d_1),(d_2),(d_3)$ to the elements of $\ms_2$ corresponding to
$(\infty),(0),(1)$. Applying this collineation to $\ms_1$ still yields an
affine plane isomorphic to $A_q$, and the statement follows.
\end{proof}

Corollary \ref{crl:three-directions-representation} allows us to represent
the translation plane and $\AG(2,q)$ on the same point set so that the
three prescribed parallel classes coincide. The orthogonal decomposition
from Lemma \ref{Lm:Ortho}, applied to multiplicity functions, then shows
that an integer multiset whose inequidistributed directions are precisely
these three directions has the same inequidistributed directions in both
planes. 
We therefore obtain the following.

\begin{crl}\label{crl:translation-three-directions}
Let $A_q$ be a translation plane of order $q=p^h$, and let
$(d_1),(d_2),(d_3)$ be three distinct directions.
Choose the representation from Corollary
\ref{crl:three-directions-representation}.
Then an integer multiset on $\fq^2$ has exactly
$(d_1),(d_2),(d_3)$ as its inequidistributed directions in $A_q$
if and only if it has exactly $(\infty),(0),(1)$ as its
inequidistributed directions in $\AG(2,q)$.
Consequently, Theorems \ref{thm:integer-three-directions}
and \ref{Thm:Main:Lift} apply to these multisets and ordinary
point sets, respectively, when viewed in $\AG(2,q)$.
\end{crl}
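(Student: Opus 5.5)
The plan is to derive the statement directly from Proposition~\ref{Prop:Shared directions}, applied in both directions. First I fix the representation from Corollary~\ref{crl:three-directions-representation}. It places $A_q=(\fq^2,\ml)$ and $\AG(2,q)=(\fq^2,\ml')$ on the same point set. In it, the parallel classes of $A_q$ with directions $(d_1),(d_2),(d_3)$ coincide, as families of point sets, with the parallel classes of $\AG(2,q)$ with directions $(\infty),(0),(1)$. Identifying $(d_1),(d_2),(d_3)$ with $(\infty),(0),(1)$, we get a set $D$ of shared directions containing these three. Possibly $D$ has more elements, if the planes happen to share further parallel classes, but we do not need that.

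Now let $S$ be an integer multiset on $\fq^2$ whose inequidistributed directions in $A_q$ are exactly $D'=\{(d_1),(d_2),(d_3)\}$. Since $D'\subseteq D$, Proposition~\ref{Prop:Shared directions} says that $D'$ is also the set of inequidistributed directions of $S$ in $\AG(2,q)$. Under the identification, this means exactly $(\infty),(0),(1)$.

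The converse is the same argument with the roles of the two planes exchanged. The hypotheses of Proposition~\ref{Prop:Shared directions} are symmetric in $A_q$ and $A_q'$, so we apply it with $\AG(2,q)$ as the first plane and $A_q$ as the second.

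Unpacking the proposition explains why this works and shows the only subtle point. By Lemma~\ref{Lm:Ortho}, equidistribution of $S$ from a direction $(d)$ means $\mu_S\perp V_{(d)}$. For a shared direction, $V_{(d)}$ is the same subspace of $\QQ^{\fq^2}$ in both planes, because it is spanned by the same functions $\mu_\ell-\frac1q\one$. So both planes agree on which shared directions are inequidistributed. For the non-shared directions, $S$ is equidistributed in the first plane, so Lemma~\ref{Lm:Ortho}(\ref{Lm:Ortho:2}) gives $\mu_S\in\vspann{\one,V_{(d)}}{(d)\in D'}$. Since the decomposition of Lemma~\ref{Lm:Ortho}(\ref{Lm:Ortho:1}) is orthogonal in the second plane as well, $\mu_S$ is orthogonal to every $V'_{(e)}$ with $(e)\notin D$. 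These are the only steps that need checking, and both are already contained in Lemma~\ref{Lm:Ortho}.

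Finally, the "consequently" clause is immediate from the equivalence. A multiset, or an ordinary point set, with exactly these three inequidistributed directions in $A_q$ is, viewed in $\AG(2,q)$, a multiset with exactly the directions $(\infty),(0),(1)$. Hence Theorem~\ref{thm:integer-three-directions} applies to it, and for ordinary sets so does Theorem~\ref{Thm:Main:Lift}. One caveat: the affine transformations appearing in those theorems are transformations of $\AG(2,q)$. The corollary only asserts that the classifications apply in the $\AG(2,q)$ picture, so no further argument is required.
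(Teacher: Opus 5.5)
Your proposal is correct and follows the paper's route. After placing both planes on $\fq^2$ via Corollary~\ref{crl:three-directions-representation}, the paper simply invokes the orthogonal decomposition of Lemma~\ref{Lm:Ortho} (equivalently Proposition~\ref{Prop:Shared directions}), which is exactly what you apply in both directions, and your explanation of why shared directions give identical subspaces $V_{(d)}$ faithfully expands that one-line argument.
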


\section*{Use of artificial intelligence}


The proof of Theorem
\ref{thm:integer-three-directions} 
and the covariance method from Appendix \ref{Sec:Covariance}, as an alternative to the linear algebraic arguments from Section \ref{Sec:characteristic}, were based on  suggestions made by GPT 5.5 through ChatGPT.

\section*{Acknowledgements}

Sam Adriaensen was supported by grant 12A3Y25N of Research Foundation Flanders (FWO).

This paper was supported by the János Bolyai Research Scholarship of the Hungarian Academy of Sciences. Project nos. 153080, 151504 and 152582 have been implemented with the support provided by the Ministry of Culture and Innovation of Hungary from the National Research, Development and Innovation Fund, financed under the ADVANCED\_25, EXCELLENCE\_24 and SNN\_25 funding schemes, respectively.

We thank Vladislav Taranchuk for his help.

\newcommand{\etalchar}[1]{$^{#1}$}

\appendix

\section{Covariance as an alternative to eigenspace decomposition}
 \label{Sec:Covariance}

Many inequalities in finite geometry -- upper or lower bounds on the size of point sets with a certain property -- can be proved using the so-called variance method, see e.g.\ \cite{Stinson} for a survey on this method.
The variance method is particularly useful when the equality case forces the point set to have only two possible intersection sizes with lines (or blocks).
Typically, the variance method is equivalent to an application of the bipartite expander mixing lemma, see e.g.\ \cite{DeWinter, Bishnoi}, which is an eigenvalue bound from algebraic graph theory.

Another common approach in finite geometry is to study sets $S$ and $T$
using an eigenspace decomposition of a matrix associated with the
geometry. If their characteristic vectors, after subtracting their
respective mean values, are orthogonal, then their intersection size
equals the product of their sizes divided by the total number of
points in the geometry; see e.g.\ \cite[Theorem 4]{Bamberg}.
In this section, we give a combinatorial alternative for proving such equalities.

A $2-(v,k,\lambda)$ \emph{design} is a triple $(\mp,\mb,I)$, where $\mp$ is a set whose elements are called \emph{points}, $\mb$ is a set whose elements are called \emph{blocks}, and $I \subseteq \mp \times \mb$ is the \emph{incidence relation}, such that $|\mp| = v$, every block is incident with $k$ points, and every two distinct points are incident with $\lambda$ blocks.
In that case, every point is incident with $r = \lambda \frac{v-1}{k-1}$ blocks; the total number of blocks is given by $b = vr/k$.
Given two integer multisets $S$ and $T$ of points in a $2-(v,k,\lambda)$ design, define their \emph{covariance} as
\[
 \Gamma(S,T) = \sum_{B \in \mb} \left( |S \cap B| - |S| \frac kv \right) \left( |T \cap B| - |T| \frac kv \right).
\]
We use the name covariance because, given the random variable $X$ uniformly distributed over $\mb$, $\Gamma(S,T)/b$ is the covariance of the random variables $|S \cap X|$ and $|T \cap X|$.

\begin{lm}
 \label{Lm:Cov}
 Given two integer multisets $S$ and $T$ of points in a $2-(v,k,\lambda)$ design $(\mp,\mb,I)$,
 \[
  \Gamma(S,T) = (r-\lambda) \sum_{P \in \mp} \mu_S(P) \mu_T(P) - \frac rv \frac{v-k}{v-1} |S| |T|.
 \]
 If $S$ and $T$ are ordinary sets, this simplifies to $\Gamma(S,T) = (r - \lambda) |S \cap T| - \frac rv \frac{v-k}{v-1} |S| |T|$.
\end{lm}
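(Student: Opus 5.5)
The plan is a direct double count: expand the product in $\Gamma(S,T)$ and evaluate each resulting sum using only the design axioms. Write $c = k/v$. Then
\[
 \Gamma(S,T) = \sum_{B \in \mb} |S\cap B||T\cap B| - c|T|\sum_{B}|S\cap B| - c|S|\sum_B |T\cap B| + b c^2 |S||T|.
\]
For the linear sums, use $|S\cap B| = \sum_{P \in B}\mu_S(P)$ and swap the order of summation. Since every point lies on $r$ blocks, $\sum_B |S\cap B| = r|S|$, and similarly $\sum_B |T\cap B| = r|T|$. Together with $bk = vr$, i.e.\ $bc = r$, the last three terms collapse to $-c r|S||T| - c r|S||T| + c r |S||T| = -\frac{rk}{v}|S||T|$.

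The main step is the quadratic sum. Expand
\[
 \sum_B |S\cap B||T\cap B| = \sum_{B}\sum_{P,Q \in B} \mu_S(P)\mu_T(Q) = \sum_{P,Q \in \mp} \mu_S(P)\mu_T(Q)\, \#\{B : P,Q \in B\}.
\]
The block count is $r$ when $P = Q$ and $\lambda$ when $P \neq Q$. Hence the sum equals
\[
 r\sum_P \mu_S(P)\mu_T(P) + \lambda\Bigl(|S||T| - \sum_P \mu_S(P)\mu_T(P)\Bigr) = (r-\lambda)\sum_P\mu_S(P)\mu_T(P) + \lambda|S||T|.
\]
Negative multiplicities are harmless, since everything is bilinear in $\mu_S$ and $\mu_T$.

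It remains to check that the constant coefficient is right, namely $\lambda - \frac{rk}{v} = -\frac rv\frac{v-k}{v-1}$. Substituting $\lambda = r\frac{k-1}{v-1}$ gives
\[
 r\Bigl(\frac{k-1}{v-1} - \frac kv\Bigr) = r\,\frac{v(k-1) - k(v-1)}{v(v-1)} = r\,\frac{k-v}{v(v-1)},
\]
as required. This bookkeeping of constants is the only place where an error is likely; the rest is routine. For ordinary sets, $\mu_S\mu_T$ is the indicator function of $S \cap T$, which gives the simplified form.
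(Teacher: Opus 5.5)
Your proposal is correct and follows the paper's proof essentially step for step. You expand $\Gamma(S,T)$ and evaluate the linear sums as $r|S|$ and $r|T|$ using $bk = vr$. You then double count the quadratic sum via the block counts $r$ (for $P = Q$) and $\lambda$ (for $P \neq Q$), and finish with the identity $\lambda - \frac{rk}{v} = -\frac rv\frac{v-k}{v-1}$.
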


\begin{proof}
 By expanding the sum in the definition of covariance, we find
 \begin{align*}
  \Gamma(S,T) = \sum_{B \in \mb} |S \cap B| |T \cap B| - |S| \frac kv \sum_{B \in \mb} |T \cap B| - |T| \frac kv \sum_{B \in \mb} |S \cap B| + b |S| |T| \frac{k^2}{v^2}.
 \end{align*}
Note that $\sum_{B \in \mb} |S \cap B|$ counts incident pairs $(P,B) \in I$, each with weight $\mu_S(P)$. 
Counting in another way, this gives $|S|r$.
Similarly, $\sum_{B \in \mb} |T \cap B| = |T|r$.
In addition, $\frac kv = \frac rb$, hence $b \frac{k^2}{v^2} = \frac{rk}v$.
Lastly, 
\begin{align*}
 \sum_{B \in \mb} |S \cap B| |T \cap B|
 &= \sum_{B \in \mb} \left( \sum_{P \in B} \mu_S(P) \right) \left( \sum_{Q \in B} \mu_T(Q) \right) 
 = \sum_{P \in \mp} \mu_S(P) \sum_{Q \in \mp} \mu_T(Q) |\sett{B \in \mb}{P,Q \; I \; B}| \\
 &= \sum_{P \in \mp} \mu_S(P) \left( \mu_T(P) r + \sum_{Q \in \mp \setminus \{P\}} \mu_T(Q) \lambda \right)
 = \sum_{P \in \mp} \mu_S(P) \left( \mu_T(P) (r- \lambda) + |T| \lambda \right) \\
 &= |S| |T| \lambda + (r-\lambda) \sum_{P \in \mp} \mu_S(P) \mu_T(P)
\end{align*}
Combining this, we obtain
\[
 \Gamma(S,T) = (r-\lambda) \sum_{P \in \mp} \mu_S(P) \mu_T(P) + \left( \lambda - \frac{rk}v \right) |S||T|.
\]
Note that $\lambda = \frac{r(k-1)}{v-1}$, so
\[
 \lambda - \frac{rk}v = r \left( \frac{k-1}{v-1} - \frac kv \right) = -\frac rv \frac{v-k}{v-1}.
\]
This yields the claimed equality. 
\end{proof}

We will now use it to give alternative proofs of the results in Section \ref{Sec:characteristic}.

\bigskip 

\noindent {\bf Theorem \ref{Thm:No shared directions}.}
 {\it
 Let $A_q = (\mp,\ml)$ be an affine plane of order $q$.
 Let $S$ and $T$ be integer multisets of points in $A_q$.
 Suppose that there is no direction from which both $S$ and $T$ are inequidistributed.
 Then
 \[
  \sum_{P \in \mp} \mu_S(P) \mu_T(P) = \frac{|S| |T|}{q^2}.
 \]
 If $S$ and $T$ are ordinary sets, this formula simplifies to $|S \cap T| = |S| |T| / q^2$.
 In particular, if $S$ and $T$ are disjoint point sets of size $q$, then there is a direction determined by both of them.
 }

\begin{proof}
 Take a line $\ell$, and call its direction $(d)$.
 Then either $S$ is equidistributed from $(d)$ and $|S \cap \ell| = \frac{|S|}q$ or $T$ is equidistributed from $(d)$ and $|T \cap \ell| = \frac{|T|}q$.
 Therefore,
 \[
  \Gamma(S,T) = \sum_{\ell \in \ml} \left( |S \cap \ell| - \frac{|S|}q \right) \left( |T \cap \ell| - \frac{|T|}q \right) = 0.
 \]
 On the other hand, $A_q$ is a $2-(q^2,q,1)$ design with $r = q+1$.
 Lemma \ref{Lm:Cov} implies that $0 = \Gamma(S,T) = q \sum_{P \in \mp} \mu_S(P) \mu_T(P) - \frac{|S| |T|}q$, which yields the claimed equality.
\end{proof}

{\bf Proposition \ref{Prop:Shared directions}.}
{\it
 Suppose that $A_q = (\mp,\ml)$ and $A'_q = (\mp,\ml')$ are affine planes of order $q$ on the same point set.
 Suppose that $\ml$ and $\ml'$ share some parallel classes of lines.
 We may assign the same direction to each shared parallel class of lines in $A_q$ and $A_q'$; let $D$ denote the set of shared directions.
 Let $S$ be an integer multiset of points.
 Suppose that in $A_q$, the inequidistributed directions of $S$ are a subset $D' \subseteq D$.
 Then also in $A'_q$, $D'$ is the set of inequidistributed directions of $S$.
}

\begin{proof}
 It suffices to prove that every line $\ell' \in \ml' \setminus \ml$ intersects $S$ in $|S|/q$ points.
 Take a direction $(d) \in D$ and a line $\ell$ with direction $(d)$.
 Then $\ell$ and $\ell'$ are not parallel in $A_q'$, hence $|\ell \cap \ell'| = 1$.
 Thus, in $A_q$, $\ell'$ is equidistributed from all directions $(d) \in D$.
 By Theorem \ref{Thm:No shared directions}, $|S \cap \ell'| = |S| |\ell'| / q^2 = |S| / q$.
\end{proof}

\end{document}